\newtheorem{theorem}{Theorem}[section]
\newtheorem{lemma}[theorem]{Lemma}
\numberwithin{equation}{section}
\def\Vo{\vbox{\offinterlineskip\hbox{\kern 3pt$\scriptstyle\circ$}
\kern 1pt\hbox{$V$}}}
\def\Ho{\vbox{\offinterlineskip\hbox{\kern 3pt$\scriptstyle\circ$}
\kern 1pt\hbox{$H$}}}
\def\Wo{\vbox{\offinterlineskip\hbox{\kern 3pt$\scriptstyle\circ$}
\kern 1pt\hbox{$W$}}}
\newcommand{\bt}{\begin{theorem}}
\newcommand{\et}{\end{theorem}}
\newcommand{
  {\resizebox{}{!}{\input .pstex_t}}
}[2]{
  {\resizebox{#1}{!}{\input #2.pstex_t}}
}
\def\eqldef{\overset{\text{\tiny \rm def}}{=}}
\newcommand{\norm}[2][{}]{\lVert#2\rVert_{{#1}}}
\def\tra{\mathsf{T}}
\newcommand{\abs}[2][{}]{\lvert#2\rvert_{#1}}
\def\dd{\;\!\mathrm{d}}    
\def\C{{\mathrm{C}}}
\def\H{{\mathrm{H}}}
\def\L{{\mathrm{L}}}
\def\V{{\mathrm{V}}}
\def\calK{{\mathcal{K}}}
\def\calV{{\mathrm{V}}}
\def\calH{{\mathrm{H}}}
\def\bproof{\begin{proof}}
\def\eproof{\end{proof}}
\definecolor{green1}{rgb}{0.1,0.65,0}
\begin{document}

\title{A weighted finite element mass redistribution method\\ for 
dynamic contact problems}
\date{}
\author{F.~Dabaghi\thanks{Universit\'e de Lyon, CNRS, INSA-Lyon, 
Institut Camille Jordan UMR 5208,
20 Avenue A. Einstein, F-69621 Villeurbanne, France 
(\tt farshid.dabaghi@insa-lyon.fr, apetrov@math.univ-lyon1.fr, jerome.pousin@insa-lyon.fr, 
Yves.Renard@insa-lyon.fr)}\and
P.~Krej{\v{c}}{\'{\i}}\thanks{Institute of Mathematics, Czech Academy of Sciences,
\v{Z}itn\'{a} 25, CZ-11567 Praha 1, Czech Republic, {\tt krejci@math.cas.cz}.}
\and A.~Petrov$^{*}$
		\and J.~Pousin$^{*}$
		\and Y.~Renard$^{*}$}

\pagestyle{myheadings}
\thispagestyle{plain}
\markboth{F. Dabaghi, P. Krej{\v{c}}{\'{\i}}, A. Petrov,  J. Pousin, Y. Renard}
{ \footnotesize{A weighted finite element mass redistribution method for 
dynamic contact problems}}

\maketitle

\begin{abstract}
This paper deals with a one-dimensional wave equation being subjected to a 
unilateral boundary condition. An approximation of this problem 
combining the finite element and mass redistribution methods is proposed.
The mass redistribution method is based on a redistribution of the body mass 
such that there is no inertia at the contact node and the mass of the contact node
is redistributed on the other nodes. The convergence 
as well as an error estimate
in time are proved. The analytical solution associated  with
a benchmark problem is introduced and it is compared to approximate solutions
for different choices of mass redistribution.
However some oscillations for the energy associated with approximate solutions obtained 
for the second order schemes can be observed after the impact. To overcome this difficulty, 
an new unconditionally stable and a very lightly dissipative scheme
is proposed. 
\end{abstract}

\hspace*{-0.6cm}{\bf {Key words.}}
Numerical solution, mass redistribution method, variational
inequality, unilateral contact, energy conservation.
\vspace{0.3cm}

\hspace*{-0.6cm}{\bf {AMS Subject Classification.}}
35L05, 35L85, 49A29, 65N10, 65N30, 74M15.


\section{Introduction}
\label{desc_models}
\label{desc_models_ch3}

The present paper highlights some new numerical results obtained for a one-dimensional
elastodynamic contact problem. Dynamical contact problems play a crucial role in 
structural mechanics as well as in biomechanics and a considerable amount of 
engineering and mathematical literature has been dedicated to this topic last decades.
One of the main difficulties in the numerical treatment of such problems is 
the physically meaningful non-penetration condition that is usually modeled by
using the so-called Signorini boundary condition. Basically, the lack of well-posedness 
results mainly originates from the hyperbolic structure of the problem
which gives rise to shocks 
at the contact interfaces. Then the resulting 
nonsmooth and nonlinear variational inequalities lead to fundamental difficulties
in mathematical analysis as well as in the development of numerical integration schemes. 
In view to avoid these difficulties, the non-penetration condition is quite often relaxed
in the numerical integration schemes. We may also observe that 
most of unconditionally stable schemes for the 
linear elastodynamic problems lose their unconditional stability in the presence 
of contact conditions.
Among them the classical Newmark method is the most popular one. However, its
unsatisfactory handling of the non-penetration conditions may lead to artificial 
oscillations at the contact boundary and even give rise to an undesirable energy blow-up
during the time integration, the reader is referred to~\cite{Renard, DOER11, daba13} as well as to
the references therein for further details. 
To overcome these
difficulties, some numerical methods based on the Newmark scheme for
solving impact problems are proposed  in~\cite{Car91}. However, these methods lead to
some important energy losses when the contact takes place even if the time
step is taken sufficiently small. On the other hand, the energy conserving time integration schemes 
of Newmark type are introduced in~\cite{LauLov02, LauCha97, LauCH98} as well as in
the monograph~\cite{LauCCIM03},
but these schemes are unable to circumvent the undesirable oscillations at the contact boundary.
These unphysical oscillations
are avoided by the numerical methods developed in~\cite{DKE07CSNM}
but these methods are still energy
dissipative.
\vspace{0.3em}

Another approach consists in removing the mass at the contact nodes 
and it was originally investigated
in~\cite{Renard} and later on used
in~\cite{HHW08CAFC,Haur10,Ren10,CHHIRE1-14}.
This approach prevents the oscillations at the contact boundary 
and leads to well-posed and energy conserving 
semi-discretization of elastodynamic contact problems (see~\cite{Ligursky2011,DPPR13}).
However, some numerical experiments, 
exhibited in~\cite{daba13}, highlight a phase shift in time
between analytical and approximate solutions. Note that an analytical 
piecewise affine and periodic solution to our problem can be obtained by 
using the characteristics method while approximate solutions are exhibited 
for different time discretizations.
This phase shift in time
comes from the removed mass at the contact nodes for approximate problems which is
unacceptable for many applications.
Therefore a variant of the mass redistribution method is proposed in this work.
More precisely, this new method
consists in transferring the mass of the contact node on the other nodes meaning that
the total mass of the considered material is preserved.  
Numerical experiments presented
in this work show that the undesirable phase shift between
the approximate and analytical solutions disappears and all the 
properties of the mass redistribution method mentioned above are preserved. They highlight  
that the \emph{weighted mass redistribution method} is particularly well adapted to 
deal with contact problems.

\vspace{0.3em}
The paper is organized as follows. 
In Section~\ref{math_form}, the mathematical formulation of a one dimensional elastodynamic
contact problem is presented. The contact is modeled by using the Signorini
boundary conditions in displacement, which are based on a linearization of the physically
meaningful non penetrability of the masses.
Then a space semi-discretization based on a variant of the mass redistribution method
 is presented in Section~\ref{Discretization_ch3}.
This variant of the mass redistribution method consists in transferring the mass of the contact node 
on the other nodes while the inertia vanishes at the contact node. 
The error estimate in time as well as the convergence result are established.  
A benchmark problem is introduced 
in Section \ref{waveeq_ch3} and its analytical solution is exhibited. Then 
numerical experiments for some space-time discretizations like the Crank-Nicolson
or the backward Euler methods are reported. 
These numerical experiments highlight that the choice of the nodes where 
the mass is transferred plays a crucial role to get a better approximate solution.
However some oscillations for the energy associated with approximate solutions 
for the second order schemes like Crank-Nicolson scheme can be observed after the impact.
To overcome the difficulty, a hybrid scheme mixing the Crank-Nicolson as well as
the midpoint methods and having the properties to be 
an unconditionally stable scheme
is proposed in Section \ref{hybrid_scheme}.

\section{Mathematical formulation}
\label{math_form}

The motion of an elastic bar of length $L$ which is free to move as long as it 
does not hit a material obstacle is studied, see Figure~\ref{contact_ch3}. 
The assumptions of small deformations are assumed and the material of the bar
is supposed to be homogeneous. Let $u(x,t)$ be the displacement at time $t\in [0,T]$, \(T>0\) 
of the material point of spatial coordinate $x\in [0,L]$.  Let \(f(x,t)\)
denotes a density of external forces, depending on time and space.  The mathematical
problem is formulated as follows:
\begin{equation}
 \label{wave_ch3}
 u_{tt}(x,t)-u_{xx}(x,t)=f(x,t),\quad (x,t)\in (0,L)\times(0,T),
\end{equation}
with Cauchy initial data
\begin{equation}
 \label{init_cond_ch3}
 u(x,0)=u^0(x) \quad \text{and} \quad u_t(x,0)=v^0(x), \quad x\in (0,L),
\end{equation}
and Signorini and Dirichlet boundary conditions at \(x=0\) and \(x=L\), respectively,  
\begin{equation}
\label{bound_cond_ch3}
0\leq u(0,t)\perp u_x(0,t)\leq 0\quad\text{and}\quad
u(L,t)=0, \quad t\in [0,T].
\end{equation}
Here $u_t\eqldef \frac{\partial u}{\partial t}$ and $u_x\eqldef
 \frac{\partial u}{\partial x}$.  
The orthogonality has a natural meaning: 
an appropriate duality product between two terms of relation vanishes.
It can be alternatively stated as the inclusion
\begin{equation}\label{e1}
u_x(0,t) \in \partial I_{[0,\infty)} (u(0,t)),
\end{equation}
where $I_{[0,+\infty)}$ is
the indicator function of the interval $[0,+\infty)$, and 
$\partial I_{[0,\infty)}$ is its subdifferential.
\begin{figure}[htbp]
\begin{center}
\includegraphics[width=6cm,angle=0]{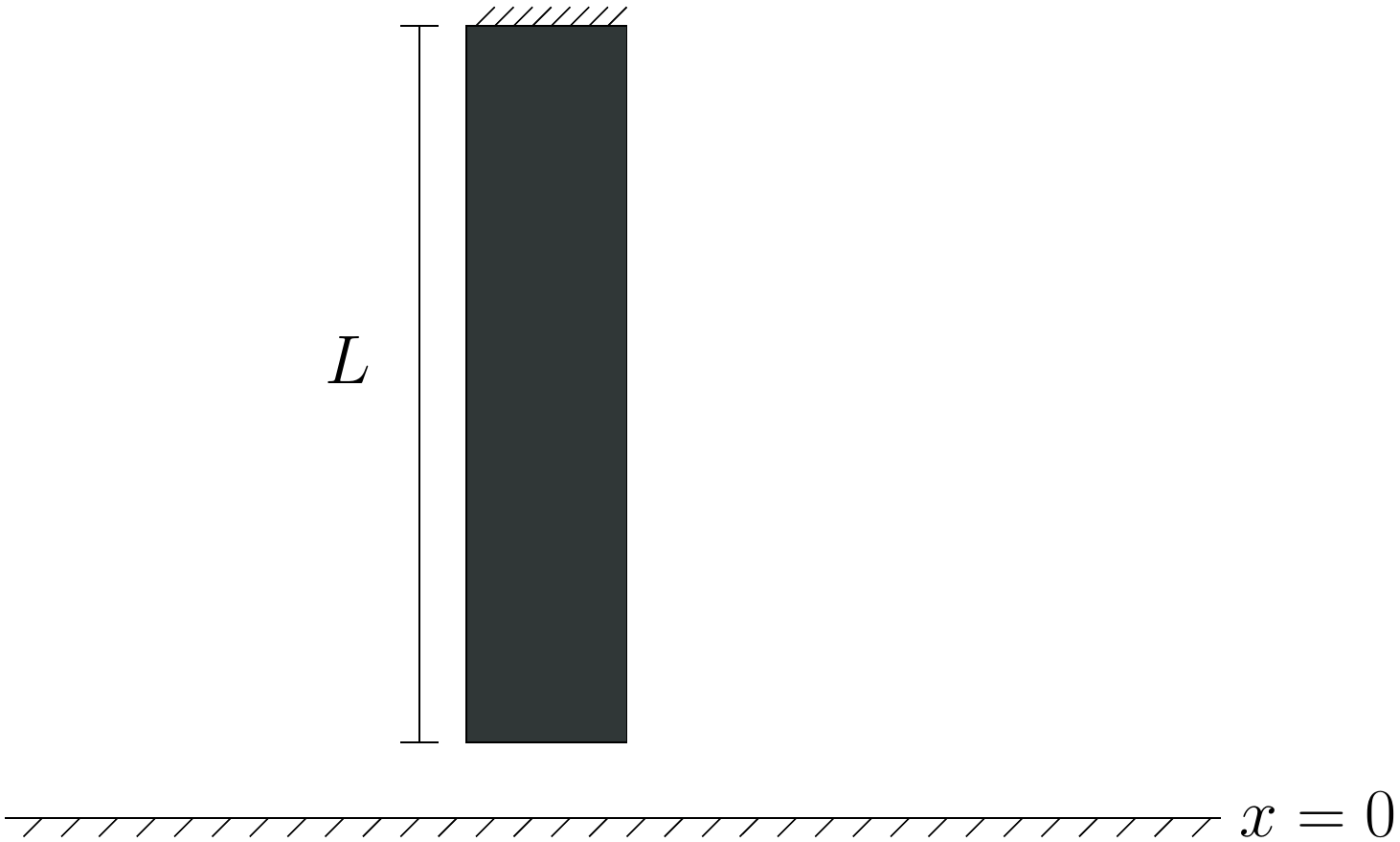}
\end{center}
\caption{An elastic bar vibrating on impacting obstacle.}
\label{contact_ch3}
\end{figure}  

\vspace{0.5em}
Let us describe the weak formulation associated with
\eqref{wave_ch3}--\eqref{bound_cond_ch3}. 
For that purpose, it is convenient to introduce the following notations:
\(\calV\eqldef \{u\in\H^1(0,L): u(L)=0\}\), \(\calH\eqldef\L^2(0,L)\),
\(\mathcal{V}\eqldef \{u\in\L^2(0,T;\calV):  u_t\in\L^2(0,T;\calH)\}\)
and the convex set  
\(\calK\eqldef \{u\in\mathcal{V}:  u(0,\cdot)\geq 0 \text{ a.\,e.}\}\). 
Thus the weak formulation associated with
\eqref{wave_ch3}--\eqref{bound_cond_ch3} 
obtained by multiplying \eqref{wave_ch3} by $v-u$ and by integrating
formally this result over $Q_{T}\eqldef (0,L)\times(0,T)$ reads:
\begin{equation}
\label{wf_ch3}
\begin{cases}
 \text{Find } u\in 
 \calK \text{ such that}\\
 \displaystyle{
 -\int^L_0 v^0(x)(v(x,0){-}u^0(x))\dd x
 - \int_{Q_T} u_t(x,t) ( v_t(x,t){-} u_t(x,t))\dd x\dd t}+\\
\displaystyle{\int_{Q_T} u_x(x,t)(v_x(x,t){-} u_x(x,t))\dd x\dd t\geq 
  \int_{Q_T} f(x,t) (v(x,t){-} u(x,t))\dd x\dd t}\\
 \text{ for all }v\in \calK\text{ for which there exists }\zeta>0
 \text{ with }  v=u\text{ for }t\geq T-\zeta.
 \end{cases}
\end{equation}
For Problem \eqref{wf_ch3}, the following existence and uniqueness result
was proved in~\cite[Theorem 14]{LebSch84}.

\begin{theorem}\label{t1}
Let $u^0 \in \mathrm{H}^{3/2}(0,L)\cap \emph{V}$, 
$v^0 \in \mathrm{H}^{1/2}(0,L)$, $f \in \mathrm{H}^{3/2}(Q_T)$
be given. Then there exists a unique solution $u 
\in \mathrm{L}^\infty(0,T; \mathrm{H}^{3/2}(0,L)\cap V)
\cap \mathrm{W}^{1,\infty}(0,T; \mathrm{H}^{1/2}(0,L))$ of Problem \eqref{wf_ch3}
and the energy balance equation
\begin{equation}
\label{eq:energy2_ch3}
\int_0^L(\abs{u_t(x,\tau)}^2 + \abs{u_x(x,\tau)}^2)\dd x=
\int_0^L(\abs{v^0(x)}^2 + \abs{{u_x^{0}}(x)}^2)\dd x + 2\int_{Q_{\tau}}f(x,t)u_t(x,t)\dd x\dd t
\end{equation}
holds for all $\tau \in [0,T]$.
\end{theorem}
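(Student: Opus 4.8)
The plan is to obtain existence by penalizing the unilateral constraint \eqref{e1}, to derive energy estimates uniform in the penalty parameter, and to pass to the limit; uniqueness, the sharp regularity, and the energy balance \eqref{eq:energy2_ch3} are then extracted from the one-dimensional characteristic structure of the wave operator. First I would replace the inclusion \eqref{e1} by its Yosida regularization. For $\epsilon>0$ I solve the semilinear wave equation $u^\epsilon_{tt}-u^\epsilon_{xx}=f$ on $Q_T$ with the Cauchy data \eqref{init_cond_ch3}, the Dirichlet condition $u^\epsilon(L,t)=0$, and the nonlinear boundary condition $u^\epsilon_x(0,t)=-\frac1\epsilon[u^\epsilon(0,t)]_-$, where $[s]_-\eqldef\max(-s,0)$. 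Since $s\mapsto-\frac1\epsilon[s]_-$ is monotone and Lipschitz, well-posedness of $u^\epsilon$ follows from the standard theory for semilinear wave equations, e.g.\ a Galerkin scheme in $\calV$ combined with the nonlinear semigroup generated by the monotone boundary perturbation of the d'Alembert operator.

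Testing the penalized equation with $u^\epsilon_t$ produces, after integration by parts, the boundary term $u^\epsilon_x(0,t)u^\epsilon_t(0,t)=\frac{d}{dt}\bigl(\frac{1}{2\epsilon}[u^\epsilon(0,t)]_-^2\bigr)$, the time derivative of a nonnegative penalty energy. This yields bounds on $\norm[\L^\infty(0,T;\calH)]{u^\epsilon_t}$ and $\norm[\L^\infty(0,T;\calH)]{u^\epsilon_x}$, together with a bound on the penalty energy $\frac1\epsilon[u^\epsilon(0,\cdot)]_-^2$ in $\L^\infty(0,T)$, all uniform in $\epsilon$; the latter forces the penetration $[u^\epsilon(0,\cdot)]_-$ to zero, so any limit satisfies $u(0,\cdot)\geq0$. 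Using the one-dimensional embedding $\calV\hookrightarrow\C([0,L])$ to control the traces $u^\epsilon(0,\cdot)$, I extract weak-$\ast$ limits and verify that the limit $u$ lies in $\calK$ and satisfies \eqref{wf_ch3}: here the monotonicity of the penalty term supplies the correct inequality sign (testing against $v-u^\epsilon$ with $v\in\calK$ makes the boundary contribution have a definite sign, since $[v(0,\cdot)]_-=0$), and a lower-semicontinuity/Minty argument closes the passage to the limit.

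For uniqueness and for the energy balance I would exploit the d'Alembert representation $u(x,t)=F(t+x)+G(t-x)+u_p(x,t)$, with $u_p$ a particular solution carrying $f$. The Dirichlet condition at $x=L$ couples $F$ and $G$ by a shift of $2L$, reducing the whole problem to a scalar delay differential inclusion for the contact value $u(0,t)$, i.e.\ a perfectly elastic impact problem, whose solution is unique by monotonicity and a Gronwall estimate. The energy balance \eqref{eq:energy2_ch3} then follows by multiplying \eqref{wave_ch3} by $u_t$ and integrating over $Q_\tau$: the trace at $x=L$ drops out because $u(L,t)=0$ gives $u_t(L,t)=0$, and the only remaining boundary term $\int_0^\tau u_x(0,t)u_t(0,t)\dd t$ vanishes, since the complementarity forces $u_x(0,t)=0$ whenever $u(0,t)>0$ while $u_t(0,t)=0$ a.e.\ on the contact set $\{t:u(0,t)=0\}$.

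I expect the main obstacle to be the sharp regularity $u\in\L^\infty(0,T;\H^{3/2}(0,L)\cap\calV)\cap\W^{1,\infty}(0,T;\H^{1/2}(0,L))$. The velocity trace jumps at the impact instants, so no spatial regularity beyond $\H^{3/2}$ can hold, and pinning down exactly this threshold requires analyzing how the reflected profiles $F$ and $G$ lose and recover smoothness across impacts in the characteristic reduction; the soft compactness argument alone delivers only $u\in\L^\infty(0,T;\calV)$. The companion difficulty is to justify rigorously, at this low regularity, the traces $u_x(0,\cdot)$ and $u_t(0,\cdot)$ and the pointwise-in-time complementarity used in the energy identity.
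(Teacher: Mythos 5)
First, a point of reference: the paper itself contains no proof of Theorem \ref{t1}; it is quoted from Lebeau and Schatzman \cite[Theorem 14]{LebSch84}, so your attempt must be measured against that work rather than against anything in the present text. Your skeleton is a legitimate one in one space dimension: penalization of \eqref{e1} with the monotone boundary term $-\frac1\epsilon[u^\epsilon(0,t)]_-$ does give uniform energy bounds (the boundary flux is an exact time derivative of the nonnegative penalty energy, as you say), and the d'Alembert reduction at $x=0$ is indeed the mechanism that makes the one-dimensional problem tractable: window by window of length $2L$ the Signorini condition becomes a scalar maximal monotone inclusion $\dot w+\partial I_{[0,\infty)}(w)\ni g$ for the trace $w=u(0,\cdot)$, with $g$ determined by the incoming profile, and Brezis' theory then yields uniqueness of the reflected wave and the a.e.\ complementarity $u_x(0,\cdot)\,u_t(0,\cdot)=0$ needed for \eqref{eq:energy2_ch3}. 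One caveat you glossed over: this contraction argument works only with the correct pairing of incoming (known) and outgoing (unknown) profiles; with the roles exchanged the inclusion has the expansive sign and gives no uniqueness, so the sign verification is not optional.

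The genuine gap is the one you flag yourself, and it is not a peripheral issue: the theorem asserts existence of a solution in $\mathrm{L}^\infty(0,T;\mathrm{H}^{3/2}(0,L)\cap V)\cap\mathrm{W}^{1,\infty}(0,T;\mathrm{H}^{1/2}(0,L))$, while your compactness argument produces only an energy-class solution in $\mathrm{L}^\infty(0,T;\calV)\cap\mathrm{W}^{1,\infty}(0,T;\calH)$. The $\mathrm{H}^{3/2}/\mathrm{H}^{1/2}$ regularity (which is where the hypothesis $f\in\mathrm{H}^{3/2}(Q_T)$ enters, and which is the actual content of \cite[Theorem 14]{LebSch84}) is therefore simply not established, so the statement as written is not proven. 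Moreover, this gap propagates backwards into the parts you treat as done: your uniqueness and energy-balance arguments are carried out for solutions admitting pointwise traces $u_x(0,\cdot)$, $u_t(0,\cdot)$ satisfying the Signorini conditions a.e., whereas uniqueness must hold among all weak solutions of \eqref{wf_ch3}, for which the variational inequality encodes the boundary condition only in integrated form. To close this you would need a hidden-regularity/trace argument showing that every weak solution of \eqref{wf_ch3} admits the characteristic representation with the pointwise boundary inclusion (feasible in 1D, but it is real work, not a remark), or else a uniqueness proof at the purely variational level. As it stands, the proposal proves existence of some energy-class solution plus uniqueness and energy conservation within a more regular class, which is strictly weaker than Theorem \ref{t1}.
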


Existence and uniqueness results are obtained for a similar situation of a 
vibrating string with concave obstacle in one dimensional space in \cite{Scha80} and 
also for a wave equation with unilateral constraint at the boundary in a half-space 
of $\mathbb{R}^N$  in~\cite{LebSch84}.
An existence result for a wave equation in a $\C^2$-regular bounded domain 
constrained by an obstacle at the boundary in $\mathbb{R}^2$ 
is proven in \cite{Kim89}. The reader is also referred to~\cite{DPPR13}.


\section{Finite element discretization and convergence of the mass redistribution method}
\label{Discretization_ch3}

This section is devoted to semi-discrete problems in space associated with \eqref{wf_ch3}
by using the mass redistribution method, see \cite{Renard,DPPR13},
assuming that the hypotheses of Theorem \ref{t1} are satisfied.
More precisely, the weighted mass redistribution method consists in transferring  
the mass of the contact node on the other nodes implying that 
the node at the contact boundary  evolves in a quasi-static way.
To this aim, we choose an integer $m > 1$, and put
$h\eqldef \frac{L}{m}$ (mesh size) with the goal to let
 \(m\) tend to \(+\infty\).
We introduce the spaces
\(\calV_h\eqldef  \{ v_h\in \mathrm{C}^0([0,L]) : v_h|_{[ih ,(i{+}1)h]} \in P_1,   
 i=0,\ldots, m-1, v_h(L)= 0\}\) where $P_1$ is the space 
of polynomials of degree less than or equal to 1.
We consider the following discretized problem:
\begin{equation*}
(\mathrm{P}_{u_h})\hspace{1em}
\begin{cases}
\text{Find }  u_h : [0,T] \rightarrow \calV_h  \text{ and }  \lambda_h: 
[0,T] \rightarrow \mathbb{R} \text{ such that for all } v_h\in \calV_h\\
\displaystyle{
\int_0^L \big ((u_{h,tt}{-}f) v_h  {+}   u_{h,x}v_{h,x} \big) \dd x  
= -\lambda_h v_h(0)},\\
\lambda_h(t) \in \partial I_{[0,\infty)} (u_h(0,t)),\\
u_h(\cdot,0)= u_h^0 \quad\text{ and }\quad u_{h,t}(\cdot,0)=v_h^0,
 \end{cases}
\end{equation*}
where \(u^0_h\) and \(v^0_h\) belong to \(\calV_h\) 
and they are the approximations of the initial displacement and velocity \(u_0\)
and \(v_0\), respectively,
and \(\lambda_h\) is the Lagrange multiplier representing the contact force.
The inclusion in \((\mathrm{P}_{u_h})\) (cf.~also \eqref{e1})
can be written as a variational inequality in the form  
\begin{equation*}
\begin{cases}
\lambda_h(t) (u_{h}(0,t){-}z) \geq 0  \quad\text{ for all }\quad z\geq 0\\
u_{h}(0,t)\geq 0.
\end{cases}
\end{equation*}
The approximation $u_{h}$ are taken in the following form
\begin{equation*}
 u_{h}(x,t)= \sum_{k=0}^{m-1}u_k(t)\varphi_k(x),
\end{equation*}
where the basis functions $\varphi_k$ are assumed piecewise linear, namely we have 
\begin{equation*}
\varphi_0 (x) \eqldef
\begin{cases}
1-\frac{x}{h}&\text{ if }x\in [0,h)\\
0 &\text{ if }x \geq h
\end{cases}
\quad\text{ and }\quad
\varphi_k (x) \eqldef
\begin{cases}
\frac{x}{h}-k+1&\text{ if } x\in [(k{-}1)h,kh)\\
k+1-\frac{x}{h}&\text{ if }x\in [kh,(k{+}1)h)\\
0  &\text{ otherwise }\\
\end{cases}
\end{equation*}
 for $k=1,\ldots, m-1$. Notice that \(u_k(t) = u_h(kh,t)\) for \(k=0,\ldots,m-1\) and $t\in [0,T]$.
 The test functions $v_h$ are also considered in the form 
 \begin{equation*}
 v_{h}(x,t)= \sum_{k=0}^{m-1}v_k(t)\varphi_k(x).
\end{equation*}
It is convenient for numerical computations to redistribute the mass and 
modify the problem $(\mathrm{P}_{u_h})$ as follows:
\begin{equation*}
(\mathrm{P}_{u_h}^\textrm{mod})\hspace{1em}
\begin{cases}
\text{Find }  u_h : [0,T] \rightarrow \calV_h  \text{ and }  \lambda_h: 
[0,T] \rightarrow \mathbb{R} \text{ such that for all } v_h\in \calV_h\\
\displaystyle{
\int_0^L \big((u_{h,tt}{-}f) v_h w_h {+}  u_{h,x}v_{h,x} \big) \dd x  
= -\lambda_h v_h(0)},\\
\lambda_h(t)\in\partial I_{[0,\infty)}(u_h(0,t)),\\
u_h(\cdot,0)= u_h^0 \quad\text{ and }\quad u_{h,t}(\cdot,0)=v_h^0,
 \end{cases}
\end{equation*}
where $w_h$ are weight functions which converge to $1$ in suitable sense as $h$ tends to $0$.
We choose them to be piecewise constant
\begin{equation*}
 w_h(x) = \sum_{j=0}^{m-1} w_j \chi_{[(j-1)h, jh]}(x),
\end{equation*}
continuously extended to $x=1$, where $\chi_A$ is the characteristic function 
of the set $A$, that is $\chi_A(x)=1$ if 
$x\in A$ and $\chi_A(x)=0$ if $x \notin A$.
A function $u_h$ is a solution of  $(\mathrm{P}_{u_h}^\textrm{mod})$ 
if and only if  $(\mathrm{P}_{u_h}^\textrm{mod})$ is 
satisfied for $v_h=\varphi_i$ for all $i=0,\ldots, m-1$. 
Hence, we can rewrite $(\mathrm{P}_{u_h}^\textrm{mod})$ in the following form
\begin{equation*}
\begin{cases}\displaystyle
\sum_{k=0}^{m-1} \ddot u_k(t) \int_0^L \varphi_k(x)\varphi_i(x)w_h(x) \dd x \displaystyle +
\sum_{k=0}^{m-1}  u_k(t) \int_0^L \varphi_k'(x)\varphi_i'(x)\dd x\\ =
-\lambda_h(t)\varphi_i(0) + \displaystyle \int_0^L  f (x,t)w_h(x)\varphi_i(x)\dd x, \\
\lambda_h(t)\in\partial I_{[0,\infty)}(u_h(0,t)),\\
u_h(\cdot,0)= u_h^0 \quad\text{ and }\quad u_{h,t}(\cdot,0)=v_h^0,
 \end{cases}
\end{equation*}
for all $i=0,\cdots, m-1$. This is a problem of the type 
\begin{equation*}
(\mathrm{P}_{u_h}^\textrm{mod*})\hspace{1em}
\begin{cases}\displaystyle
\sum_{k=0}^{m-1} M_{ik} \ddot u_k(t) + \sum_{k=0}^{m-1} S_{ik} u_k(t) 
= f_i(t) - \lambda_h(t) \delta_{i0},\\
\lambda_h(t) \in \partial I_{[0,\infty)} (u_h(0,t)),\\
u_h(\cdot,0)= u_h^0 \quad\text{ and }\quad u_{h,t}(\cdot,0)=v_h^0,
 \end{cases}
\end{equation*}
where $\delta_{i0}$ is the Kronecker symbol, 
$f_i(t)=\int_0^L f(x,t) w_h(x)\varphi_i(x) \dd x$. 
The symmetric matrices $M=(M_{ik})$ and $S=(S_{ik})$ can
be computed directly from the formulas 
\begin{equation*}
 M_{ik} = \int_0^L \varphi_k(x)\varphi_i(x)w_h(x) \dd x
 \quad\text{ and }\quad
 S_{ik} = \int_0^L \varphi_k'(x)\varphi_i'(x) \dd x,
\end{equation*}
that is 
\begin{equation*}
\begin{aligned}
& M_{00}= \frac{h}{3}w_0, \quad
 M_{ii}=\frac{h}{3} (w_{i-1}{+}w_i )\text{ for } i=1, \ldots, m-1, \quad
 M_{i,i-1}=\frac{h}{6} w_i\text{ for } i=0, \ldots, m-2,\\&
 S_{00}=\frac1{h},\quad
 S_{ii}=\frac2{h}\text{ for } i=1, \ldots, m-1,\quad
 S_{i,i+1}=-\frac1h\text{ for } i=0, \ldots, m-2.
 \end{aligned}
\end{equation*}
In matrix representation, we have
\begin{equation*} 
M= \frac{h}{3}
\begin{pmatrix}
w_0      & \tfrac{1}{2}w_0       &~~0      &\cdots      &   \cdots    & 0  \\
 \tfrac{1}{2}w_0      & w_0+w_1      & \tfrac{1}{2}w_1      &  ~ \ddots  &       &~ \vdots   \\
~0      & \tfrac{1}{2}w_1       & w_1+w_2     & ~ ~\ddots  &  \ddots     & ~\vdots   \\
\vdots& ~\ddots  & ~\ddots & ~ \ddots   & ~\ddots      & ~0   \\
 \vdots        &          &   \ddots      &\tfrac{1}{2}w_{m-3}          & w_{m-3}+w_{m-2}    & \tfrac{1}{2}w_{m-2} \\
0        &\cdots   & \cdots        & 0        & \tfrac{1}{2}w_{m-2}     &w_{m-2}+w_{m-1}
 \end{pmatrix}
\end{equation*}
and 
\begin{equation*} 
S= \frac{1}{h}
\begin{pmatrix}
~~1      & -1       &~~0      &\cdots      &   \cdots    & 0  \\
-1       & ~~2      & -1      &  ~ \ddots  &       &~ \vdots   \\
~~0      & -1       & ~~2     & ~ ~\ddots  &  \ddots     & ~\vdots   \\
\vdots& ~\ddots  & ~\ddots & ~ \ddots   & ~\ddots      & ~0   \\
 \vdots        &          &   \ddots      & -1         & ~~2   & -1\\
0        &\cdots   & \cdots        & ~~0        & -1    &~ ~1
 \end{pmatrix}.
\end{equation*}
Note that \(M\) and \(S\) are usually called mass and stiffness matrices, respectively.
Consider first the problem $(\mathrm{P}_{u_h}^\textrm{mod*})$ for $i=0$. We have 
\begin{equation} \label{p_mod_i=1}
\begin{cases}
\displaystyle{\frac{h}{3} w_0 \Bigl(\ddot u_0 {+} \frac{1}{2} \ddot u_1\Bigr)+
  \frac{1}{h}( u_0{-} u_1) = f_0-\lambda_h(t),}\\
\lambda_h(t) \in \partial I_{[0,\infty)}(u_h(0,t)).
\end{cases}
\end{equation}
For $w_0 > 0$, this produces oscillations of $u_0$ which are not observed in the limit. 
To eliminate these unphysical oscillations which are purely 
due to the numerical method, we assume $w_0=0$, so that 
\eqref{p_mod_i=1} becomes (note that $f_0=0$ if $w_0=0$)
\begin{equation} 
\label{p_mod_positive_part}
 \frac{1}{h}( u_1{-}u_0)\in \partial I_{[0,\infty)}(u_0),
\end{equation}
or equivalently, 
\begin{equation} \label{fucik}
 u_0(t)=u_1^+(t),
\end{equation}
where \(u_1^+\) denotes the positive part of \(u_1\).
Then for $i=1$, we obtain from $(\mathrm{P}_{u_h}^\textrm{mod*})$ that 
\begin{equation*} 
\frac{h}{3} w_1(\ddot u_1{+} \frac{1}{2} \ddot u_2)+ \frac{1}{h}( 2u_1 {-}  u_0{-}u_2) = f_1\\
\end{equation*}
and taking \eqref{p_mod_positive_part} into account, this yields 
\begin{equation*} 
\frac{h}{3} w_1\Bigl(\ddot u_1 {+} \frac{1}{2} \ddot u_2\Bigr)+ 
\frac{1}{h}( 2u_1{-}u_2) = f_1+ \frac{1}{h} u_1^+.
\end{equation*}
We have thus eliminated the singularities and problem  
$(\mathrm{P}_{u_h}^\textrm{mod*})$ can be equivalently stated as 
\begin{equation*}
 (\mathrm{P}_{u_h}^\textrm{mod**})
 \hspace{2em}
 \begin{cases}
\displaystyle{ \sum_{k=1}^{m-1} h M_{ik}^*\ddot u_k + \sum_{k=1}^{m-1} 
 \frac{1}{h}S_{ik}^* u_k = f_i + \frac{1}{h} u_1^+ \delta_{1i}\text{ for all }
 i=1,\ldots, m-1,}\\
 u_h(\cdot,0)= u_h^0 \quad\text{ and }\quad u_{h,t}(\cdot,0)=v_h^0,
 \end{cases}
\end{equation*}
with a Lipschitz continuous nonlinearity on the right hand side, and with matrices
\begin{equation*}
M_{ik}^*= \frac{1}{h} M_{ik}\text{ for }i,k=1,\ldots, m-1
 \quad\text{ and }\quad
S_{ik}^*= h S_{ik} \text{ for } i,k=1,\ldots, m-1.
\end{equation*}
Note that $(\mathrm{P}_{u_h}^\textrm{mod**})$ 
is related to a more general problem of Fu{\v{c}}\'{i}k spectrum (or `jumping nonlinearity'
in the old terminology); the reader is
referred to \cite{Fuc76BPJN} for further details.
Furthermore, we may observe that 
\((\mathrm{P}_{u_h}^\textrm{mod**})\) can be rewritten as follows:
\begin{equation*}
 (\mathrm{P}_{U_h}^\textrm{mod})
 \hspace{2em}
 \begin{cases}
\text{Find } {U}_h: [0,T] \rightarrow \mathbb{R}^{m-1}\text{ such that}\notag\\
\displaystyle{h M^*\ddot U_h + \frac{1}{h} S^* U_h = F+\frac{1}{h} u_1^+ e_1},\\
 U_h^0=U^0 \quad\text{ and }\quad \dot{U}_h^0=V^0,
 \end{cases}
\end{equation*}
where \(e_1\eqldef(1,0,\ldots,0)^{\tra}\), \(U_h\eqldef(u_1,\ldots, u_{m-1})^{\tra}\), 
\(F\eqldef(f_1,\ldots,f_{m-1})^{\tra}\), \(U^0\) and \(V^0\) approximate
the initial position and velocity.
Finally, the discrete energy associated with problem 
\((\mathrm{P}_{U_h}^\textrm{mod})\) is given by
\begin{equation}
\label{energy}
 \mathcal{E}_h (t) 
 \eqldef \Bigl(\frac{h}{2} {\dot U_h}^\tra {M^*} {\dot U_h} 
 + \frac{1}{2h} { U_h}^\tra {S^*} {U_h}  
 - \frac{1}{2h} (u_1^+)^2 - {U_h}^\tra F\Bigr)(t).  
\end{equation}

We assume that the weights $w_i$ are chosen in such a way that $M^*$ is invertible and
the matrix norm $\arrowvert\arrowvert\arrowvert (M^*)^{-1} \arrowvert\arrowvert\arrowvert$
of its inverse $(M^*)^{-1}$ is bounded above by a constant independent of $h$.
Below, we consider the following situations:
\begin{enumerate}[(\text{Mod} 1)]
\item $w_1 = \ldots = w_{m-1} \eqldef 1$\  (no redistribution);
\item $w_1 = \ldots = w_{m-1} \eqldef m/(m-1)$\ (uniform redistribution);
\item $w_1= 2$, $w_2 = \ldots = w_{m-1} \eqldef 1$\ (nearest neighbor redistribution).
\end{enumerate}
In these cases, the condition on $(M^*)^{-1}$ is satisfied.

Under this hypothesis, ($\mathrm{P}_{U_h}^{\text{mod}}$) can be rewritten as follows:
\begin{equation*}
(\mathrm{P}_{{\mathcal{U}}_h}^{\text{mod}})\hspace{1em}
\begin{cases}
\text{Find }{\mathcal{U}}_h: [0,T] \rightarrow \mathbb{R}^{m-1} 
\times\mathbb{R}^{m-1}\text{ such that}\\
{\mathcal{{U}}}_{h,t}= G({\mathcal{{U}}}_h),\\
{\mathcal{{U}}}_h(0)=(U^0,V^0)^{\tra},
\end{cases}
\end{equation*}
where we put \({\mathcal{{U}}_h}\eqldef ({U}_h, {V}_h)^{\tra}\) and
$G({\mathcal{U}}_h)\eqldef
(\frac{1}{h} (M^*)^{-1} V_h, - \frac{1}{h}S^*U_h
+ F +\frac1h u_1^+ e_1)^{\tra}$.
Observe that $G:\mathbb{R}^{m-1} \times\mathbb{R}^{m-1} 
\to \mathbb{R}^{m-1} \times\mathbb{R}^{m-1}$
is Lipschitz continuous. More specifically, for
${\mathcal{{U}}}_h^1, {\mathcal{{U}}}_h^2 \in \mathbb{R}^{m-1} \times\mathbb{R}^{m-1}$ we have
\begin{equation}\label{lipg}
 \|G({\mathcal{{U}}}_h^1)-G({\mathcal{{U}}}_h^2)\| 
\leq \frac{C}{h}\|\mathcal{{U}}_h^1-\mathcal{{U}}_h^2\|,
\end{equation}
with a constant $C$ independent of $h$, where $\| \cdot \|$
denotes the canonical norm in $\mathbb{R}^{m-1}\times\mathbb{R}^{m-1}$.
Existence and uniqueness results for the problem
\((\mathrm{P}_{{\mathcal{U}}_h}^{\text{mod}})\)
follow from the Lipschitz continuity of \(G({\mathcal{{U}}_h})\), for further details 
the reader is referred to~\cite{Crouzeix}. In particular, we have 
${\mathcal{{U}}}_h\in \C^1([0,T]; \mathbb{R}^{m-1} \times\mathbb{R}^{m-1})$.

\begin{lemma}
Let $N \in \mathbb{N}$ be given and let
$\Delta t = \frac{T}{N}$ be the time step. Then
the time discretization error for the Crank-Nicolson method to solve the semi-discrete problem 
$(\mathrm{P}_{\mathcal{U}_h}^{\text{\emph{mod}}})$ is of the order $\Delta t$. 
\end{lemma}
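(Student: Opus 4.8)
The plan is to regard $(\mathrm{P}_{\mathcal{U}_h}^{\text{mod}})$ as the Cauchy problem $\mathcal{U}_{h,t}=G(\mathcal{U}_h)$ for a globally Lipschitz vector field and to run the classical convergence argument for the trapezoidal (Crank--Nicolson) one-step scheme
\[
\mathcal{U}_h^{n+1}=\mathcal{U}_h^n+\frac{\Delta t}{2}\bigl(G(\mathcal{U}_h^n)+G(\mathcal{U}_h^{n+1})\bigr),\qquad \mathcal{U}_h^0=(U^0,V^0)^{\tra},
\]
where I write $t_n=n\Delta t$ and $\mathcal{U}_h^n$ for the computed approximation of $\mathcal{U}_h(t_n)$. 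First I would check that each step is well defined: by the Lipschitz bound \eqref{lipg} the fixed-point map $y\mapsto \mathcal{U}_h^n+\frac{\Delta t}{2}(G(\mathcal{U}_h^n)+G(y))$ is a contraction as soon as $\frac{\Delta t}{2}\cdot\frac{C}{h}<1$, so for $h$ fixed and $\Delta t$ small the scheme admits a unique iterate. I would then introduce the global error $e^n\eqldef\mathcal{U}_h(t_n)-\mathcal{U}_h^n$ and the consistency (local truncation) error
\[
\eta^n\eqldef \mathcal{U}_h(t_{n+1})-\mathcal{U}_h(t_n)-\frac{\Delta t}{2}\bigl(G(\mathcal{U}_h(t_n))+G(\mathcal{U}_h(t_{n+1}))\bigr),
\]
obtained by evaluating the exact relation $\mathcal{U}_h(t_{n+1})-\mathcal{U}_h(t_n)=\int_{t_n}^{t_{n+1}}G(\mathcal{U}_h(s))\dd s$ and comparing with the trapezoidal quadrature of that integral.

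The crucial and only nonstandard step is the size of $\eta^n$, and here the regularity of the solution is the decisive point. Because the right-hand side of $(\mathrm{P}_{\mathcal{U}_h}^{\text{mod}})$ contains the positive part $u_1^+$, which is merely Lipschitz and not differentiable, the solution is only $\mathcal{U}_h\in\C^1([0,T];\mathbb{R}^{m-1}\times\mathbb{R}^{m-1})$ and is generally not of class $\C^2$. Consequently $s\mapsto G(\mathcal{U}_h(s))$ is the composition of the Lipschitz map $G$ with the $\C^1$ function $\mathcal{U}_h$, hence Lipschitz in time, but it has no bounded second derivative. For a Lipschitz integrand the trapezoidal quadrature error over one step is $O((\Delta t)^2)$ and not the $O((\Delta t)^3)$ available for $\C^2$ integrands: writing $g\eqldef G(\mathcal{U}_h(\cdot))$ and comparing both $\int_{t_n}^{t_{n+1}}g\dd s$ and $\frac{\Delta t}{2}(g(t_n)+g(t_{n+1}))$ with the common value $\Delta t\, g(t_{n+1/2})$ gives $\|\eta^n\|\le C_h(\Delta t)^2$, where $C_h$ is controlled by the Lipschitz constant of $g$.

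Finally I would close the argument by a discrete Gr\"onwall estimate. Subtracting the scheme from the exact relation yields $e^{n+1}=e^n+\frac{\Delta t}{2}(G(\mathcal{U}_h(t_n))-G(\mathcal{U}_h^n))+\frac{\Delta t}{2}(G(\mathcal{U}_h(t_{n+1}))-G(\mathcal{U}_h^{n+1}))+\eta^n$, and \eqref{lipg} gives $\|e^{n+1}\|\le\|e^n\|+\frac{\Delta t}{2}\frac{C}{h}(\|e^n\|+\|e^{n+1}\|)+\|\eta^n\|$. Absorbing the implicit term for $\Delta t$ small and summing the resulting recursion over $n\le N$ (using $e^0=0$ and $N\Delta t=T$) produces $\max_{n}\|e^n\|\le C_h'\, T\, \Delta t\, \exp(C_h'' T)$, that is, an $O(\Delta t)$ bound, with constants depending on $h$ through the Lipschitz constant $C/h$. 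The main obstacle is therefore not the stability/Gr\"onwall machinery, which is routine, but the honest justification that the limited $\C^1$ regularity forced by the $u_1^+$ nonlinearity degrades the consistency error from second to first order; this is precisely what makes the optimal rate $\Delta t$ rather than the $(\Delta t)^2$ one would expect from Crank--Nicolson on a smooth problem.
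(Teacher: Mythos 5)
Your argument is correct, and all the essential ingredients coincide with the paper's: the Lipschitz bound \eqref{lipg} with constant $C/h$, the $\C^1$ (but not $\C^2$) regularity of $\mathcal{U}_h$ caused by the nondifferentiable term $u_1^+$, a Gronwall argument, and a restriction of the type $\Delta t \lesssim h$ with $h$-dependent constants. The two routes are, however, dual to each other. You treat the exact solution as an approximate solution of the scheme: you estimate the local truncation error $\eta^n$ by comparing both the integral and the trapezoidal sum with the midpoint value of the Lipschitz-in-time integrand $g=G(\mathcal{U}_h(\cdot))$, obtaining $O((\Delta t)^2)$ per step instead of the $O((\Delta t)^3)$ of the smooth case, and you close with a discrete Gronwall inequality for the error recursion $e^n$. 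The paper instead treats the discrete solution as an approximate solution of the ODE: it interpolates the iterates piecewise linearly, observes in \eqref{inte2} that the interpolant $\hat{\mathcal{U}}_h$ satisfies the ODE up to a defect $D_n$ bounded by $\frac{C\Delta t}{h}\|\hat{\mathcal{U}}_{h,t}\|$ purely from the Lipschitz property of $G$ (no quadrature estimate at all, only the fact that $\hat{\mathcal{U}}_h(t)$ lies on the segment between $\mathcal{U}_h^n$ and $\mathcal{U}_h^{n+1}$), and then applies the continuous Gronwall lemma to \eqref{inte4}. Your version buys two things the paper leaves implicit: the solvability of the implicit Crank--Nicolson step via the contraction condition $\frac{C\Delta t}{2h}<1$, and a fully explicit consistency analysis showing exactly where the order drops from two to one; the paper's version buys a shorter defect estimate and the possibility of quoting the classical continuous Gronwall lemma directly. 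Both proofs identify the same obstruction to second-order accuracy, namely that $G$ fails to be continuously differentiable at $u_1=0$.
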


\bproof
Keeping $h$ fixed, we define discrete times $t_n = n\Delta t$ for $n=0, \dots, N$
and define the Crank-Nicolson discretization of Problem 
\((\mathrm{P}_{{\mathcal{U}}_h}^{\textrm{mod}})\)
by the recurrent formula
 \begin{equation} 
 \label{crank_V}
 \frac{\mathcal{U}_h^{n+1} -\mathcal{{U}}_h^n}{\Delta t}
 =  \frac{1}{2}
 (G(\mathcal{{U}}_h^{n+1}){+}G(\mathcal{{U}}_h^n)), \quad n=0, \dots, N-1,
\end{equation}
with initial condition $\mathcal{{U}}_h^0 = \mathcal{{U}}_h(0)$.\\[1mm]
We compare the exact solution ${\mathcal{{U}}}_h$ 
of \((\mathrm{P}_{{\mathcal{U}}_h}^{\textrm{mod}})\)
with the piecewise linear interpolation $\hat{\mathcal{U}}_h$ of the discrete sequence
$\mathcal{{U}}_h^n$, which is defined by the formula
 \begin{equation} 
 \label{inte}
 \hat{\mathcal{{U}}}_h(t) = \mathcal{{U}}_h^n + \frac{1}{\Delta t}(t - t_n) (\mathcal{{U}}_h^{n+1}
 -\mathcal{{U}}_h^n) \quad \text{for }\ t \in [t_n, t_{n+1}),
 \quad n=0, \dots, N-1,
\end{equation}
continuously extended to $t_N = T$. We have by \eqref{crank_V} that
 \begin{equation} 
 \label{inte2}
\hat{\mathcal{{U}}}_{h,t} - G(\hat{\mathcal{{U}}}_h) = D_n(t),
\end{equation}
where for $t \in (t_n, t_{n+1})$ we have
\begin{equation*}
\|D_n(t)\| = \left\| \frac{1}{2}
 (G(\mathcal{{U}}_h^{n+1}){+}G(\mathcal{{U}}_h^n)) - G(\hat{\mathcal{{U}}}_h)\right\|
 \le \frac{C}{h} \|\mathcal{U}_h^{n+1} -\mathcal{{U}}_h^n\|
 = \frac{C\Delta t}{h} \|\hat{\mathcal{{U}}}_{h,t}\|.
\end{equation*}
We cannot expect to obtain a higher order estimate, since $G$ is not continuously differentiable
because of the presence of the term $u_1^+$. On the other hand, ${\mathcal{{U}}}_h$
is of class $\textrm{C}^1$, and we may denote
\begin{equation*}
C_h = \max_{t\in[0,T]}\|{\mathcal{{U}}}_{h,t}(t)\|.
\end{equation*}
We thus have
 \begin{equation} 
 \label{inte3}
\|{\mathcal{{U}}}_{h,t} - \hat{\mathcal{{U}}}_{h,t}\|
\le \|G({\mathcal{{U}}}_h) - G(\hat{\mathcal{{U}}}_h)\|
+ \frac{C\Delta t}{h} \|{\mathcal{{U}}}_{h,t} - \hat{\mathcal{{U}}}_{h,t}\|  + \frac{C_h C\Delta t}{h}
\end{equation}
for a.\,e. $t \in (0,T)$. By virtue of \eqref{lipg} we obtain for $\Delta t < h/C$ that
 \begin{equation} 
 \label{inte4}
\|{\mathcal{{U}}}_{h,t} - \hat{\mathcal{{U}}}_{h,t}\|
\le \frac{C}{h - C\Delta t}\|{\mathcal{{U}}}_h - \hat{\mathcal{{U}}}_h\|
 + \frac{C_h C\Delta t}{h - C\Delta t}\quad \text{a.\,e. in }\ (0,T)\,,
\end{equation}
and the assertion follows from the Gronwall argument.
\eproof

The next goal is to prove the convergence of solutions to Problem 
\((\mathrm{P}_{u_h}^{\rm{mod}})\)
(in the form \((\mathrm{P}_{\mathcal{U}_h}^{\rm{mod}})\)) as $h \to 0$. We first
observe that \((\mathrm{P}_{u_h}^{\rm{mod}})\) is equivalent to
\begin{equation*}
(\mathrm{P}_{\textrm{var}}^{\rm{mod}})\hspace{1em}
\begin{cases}
\text{Find }u_h : [0,T] \rightarrow \calV_h\text{ such that for all } v_h\in \calK\cap\calV_h\\
\displaystyle{\int_{Q_{T}} (u_{h,tt}(x,t){-}f(x,t)) (v_h(x,t){-}u_h(x,t))w_h(x) \dd x\dd t}\\+ 
\displaystyle{\int_{Q_T}u_{h,x}(x,t)(v_{h,x}(x,t){-}u_{h,x}(x,t))\dd x\dd t \geq 0,}\\
u_h(x,0)= u_h^0(x)\quad \text{ and }\quad u_{h,t}(x,0)=v_h^0(x).
 \end{cases}
\end{equation*}
We assume that the initial data \(u_h^{0}\) and \(v_{h}^0\) satisfy
\begin{equation}
\label{eq:lim}
\lim_{h\rightarrow 0}\bigl(\norm[\calV]{u_h^{0} {-} u^0}+\norm[\calH]{v_h^{0} {-} v^0}\bigr)= 0. 
\end{equation}
The convergence of the solution \(u^h\) of 
\((\mathrm{P}_{\textrm{var}}^{\rm{mod}})\) to the solution of \eqref{wf_ch3} is proved below. 
To this aim, the same techniques detailed in the proof of Theorem 4.3 in~\cite{DPPR13}
are used.  Here, we allow for general weight functions including the above cases 
(Mod~1)--(Mod~3),
while in~\cite{DPPR13}, only the case (Mod~1) was considered.
The reader is also referred to~\cite{SchBer89NAWE}. 

Choosing in \((\mathrm{P}_{u_h}^{\rm{mod}})\) the test function 
$v_h(x) = u_{h,t}(x,t)$, we see that
for all \(\tau\in [0,T]\) the following energy relation  
\begin{equation}
 \label{ch3h}
\begin{aligned}
&\int_0^L(\abs{u_{h,t}(x,\tau)}^2 w_h(x){+}\abs{u_{h,x}(x,\tau)}^2)\dd x\\ 
&=\int_0^L(\abs{v^0_h(x)}^2 w_h(x){+}\abs{{u_{h,x}^{0}}(x)}^2)\dd x
+ 2\int_{Q_{\tau}}f(x,t)u_{h,t}(x,t)w_h(x)\dd x\dd t
\end{aligned}
\end{equation}
holds.

\begin{theorem}
\label{convergence mass_ch3}
Assume that the hypotheses of Theorem \ref{t1} and condition \eqref{eq:lim} hold.
Let there exist two constants $C_w > c_w > 0$ such that
$c_w \le w_h(x) \le C_w$ for all $h>0$ and a.\,e. $x \in (h,L)$, and let
\begin{equation*}
\lim_{h\to 0} \int_0^L|w_h(x) - 1|\dd x = 0\,. 
\end{equation*}
Then the solutions \(u_h\) of \((\mathrm{P}_{\rm{var}}^{\rm{mod}})\)
converge in the strong topology of
$\mathcal{V}$ to the unique solution $u$ of \eqref{wf_ch3} as \(h\) tends to \(0\).
\end{theorem}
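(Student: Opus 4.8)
The plan is to follow the classical compactness-and-passage-to-the-limit scheme for hyperbolic variational inequalities, adapting the argument of \cite[Theorem~4.3]{DPPR13} so as to accommodate the weights $w_h$. First I would extract uniform a priori bounds from the discrete energy relation \eqref{ch3h}: using the lower bound $w_h\ge c_w$ on the kinetic term, the upper bound $w_h\le C_w$ together with $f\in\mathrm{L}^2(Q_T)$ on the forcing term, and a Gronwall argument in the variable $\tau$, one obtains bounds for $u_h$ in $\mathrm{L}^\infty(0,T;\calV)$ and for $u_{h,t}$ in $\mathrm{L}^\infty(0,T;\calH)$ that are independent of $h$ (the initial energy being controlled by \eqref{eq:lim}). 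Passing to a subsequence, $u_h\rightharpoonup u$ weakly-$*$ in $\mathrm{L}^\infty(0,T;\calV)$ and $u_{h,t}\rightharpoonup u_t$ weakly-$*$ in $\mathrm{L}^\infty(0,T;\calH)$; since in one space dimension $\calV$ embeds compactly into $\calH$ and into $\mathrm{C}([0,L])$, an Aubin--Lions--Simon argument yields strong convergence $u_h\to u$ in $\mathrm{C}([0,T];\calH)$, and in particular convergence of the traces $u_h(0,\cdot)\to u(0,\cdot)$, whence $u(0,\cdot)\ge 0$ and $u\in\calK$.

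Next I would pass to the limit in $(\mathrm{P}_{\rm{var}}^{\rm{mod}})$. For a fixed admissible $v\in\calK$ with $v=u$ for $t\ge T-\zeta$, the natural discrete competitor is the nodal interpolant $v_h(\cdot,t)=I_h v(\cdot,t)\in\calK\cap\calV_h$, which satisfies $v_h(0,\cdot)=v(0,\cdot)\ge 0$ and converges strongly to $v$ in $\mathcal{V}$. Since $u_{h,tt}$ carries the contact singularity and does not converge, I would integrate by parts in time in the inertial term. The boundary contribution at $t=T$ is $\int_0^L u_{h,t}(\cdot,T)\,(v_h(\cdot,T){-}u_h(\cdot,T))\,w_h\,\dd x$, and it vanishes in the limit because $u_{h,t}(\cdot,T)w_h$ is bounded in $\calH$ (again by \eqref{ch3h} at $\tau=T$) while $v_h(\cdot,T){-}u_h(\cdot,T)\to v(\cdot,T){-}u(\cdot,T)=0$ strongly in $\calH$. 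The weights are handled by noting that $c_w\le w_h\le C_w$ together with $\int_0^L|w_h-1|\,\dd x\to 0$ forces $w_h\to 1$ in every $\mathrm{L}^p(0,L)$, $p<\infty$; hence all the mixed products, such as $\int_{Q_T}u_{h,t}v_{h,t}w_h$ and the forcing term $\int_{Q_T}f\,(v_h{-}u_h)\,w_h$, converge by the weak-times-strong principle to their unweighted continuous counterparts.

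The main obstacle is the two purely quadratic terms produced by the integration by parts, namely $\int_{Q_T}u_{h,t}^2\,w_h$ (with the sign unfavourable for weak lower semicontinuity) and $-\int_{Q_T}u_{h,x}^2$. Weak lower semicontinuity alone only gives $\liminf\int_{Q_T}u_{h,t}^2 w_h\ge\int_{Q_T}u_t^2$ and $\liminf\int_{Q_T}u_{h,x}^2\ge\int_{Q_T}u_x^2$, which is the wrong direction for the kinetic term. To resolve this I would integrate the discrete energy relation \eqref{ch3h} over $\tau\in[0,T]$, obtaining $\int_{Q_T}u_{h,t}^2 w_h+\int_{Q_T}u_{h,x}^2 = T\!\int_0^L(|v_h^0|^2 w_h+|u_{h,x}^0|^2)\dd x+2\int_0^T\!\int_{Q_\tau}f\,u_{h,t}\,w_h\,\dd\tau$. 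The right-hand side converges (the initial term by \eqref{eq:lim}, the forcing by weak-times-strong) to $T\!\int_0^L(|v^0|^2+|u_x^0|^2)\dd x+2\int_0^T\!\int_{Q_\tau}f\,u_t\,\dd\tau$, which by the continuous energy balance \eqref{eq:energy2_ch3} integrated over $\tau$ equals $\int_{Q_T}u_t^2+\int_{Q_T}u_x^2$. Combining the equality of the limits of the sum with the two lower-semicontinuity inequalities forces each term to converge separately, $\int_{Q_T}u_{h,t}^2 w_h\to\int_{Q_T}u_t^2$ and $\int_{Q_T}u_{h,x}^2\to\int_{Q_T}u_x^2$. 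This is the heart of the proof and the step I expect to be delicate.

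With these two limits in hand, every term of the integrated-by-parts inequality converges, so $u$ satisfies \eqref{wf_ch3} for all admissible $v$; by the uniqueness in Theorem~\ref{t1} the limit is the unique solution, and hence the whole family (not merely a subsequence) converges. Finally, the separate convergence of the quadratic terms upgrades weak to strong convergence: since $u_{h,t}\sqrt{w_h}\rightharpoonup u_t$ while $\norm[\mathrm{L}^2(Q_T)]{u_{h,t}\sqrt{w_h}}\to\norm[\mathrm{L}^2(Q_T)]{u_t}$, one gets $u_{h,t}\sqrt{w_h}\to u_t$ strongly in $\mathrm{L}^2(Q_T)$, and using $w_h\to 1$ also $u_{h,t}\to u_t$ in $\mathrm{L}^2(0,T;\calH)$; likewise $u_{h,x}\to u_x$ in $\mathrm{L}^2(Q_T)$, which combined with $u_h\to u$ in $\mathrm{L}^2(Q_T)$ yields strong convergence $u_h\to u$ in $\mathcal{V}$.
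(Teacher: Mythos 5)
Your overall architecture (energy bound via \eqref{ch3h} and Gronwall, weak-$*$ compactness, uniform convergence of traces, limit passage in the inequality, then norm-plus-weak implies strong) matches the paper's, and you correctly isolated the crux: with a test function of the form $v_h=I_hv$, integration by parts produces the quadratic term $\int_{Q_T}u_{h,t}^2w_h$ with the sign for which weak lower semicontinuity is useless. But your resolution of that difficulty is circular. To identify the limit of the integrated right-hand side of \eqref{ch3h} with $\int_{Q_T}(\bar u_t^2+\bar u_x^2)$ you invoke the energy balance \eqref{eq:energy2_ch3} \emph{for the weak limit} $\bar u$. That identity is not an a priori property of weak limits; it is part of the conclusion of Theorem~\ref{t1} and holds for the solution $u$ of \eqref{wf_ch3} only, and at that point of your argument you have not yet shown $\bar u=u$ --- indeed the term-by-term convergence you are trying to extract is exactly what you need in order to prove that $\bar u$ solves \eqref{wf_ch3}. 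A priori the weak limit could dissipate energy at impact: weak lower semicontinuity only gives $\int_{Q_T}(\bar u_t^2+\bar u_x^2)\le\lim_h\bigl(\int_{Q_T}u_{h,t}^2w_h+\int_{Q_T}u_{h,x}^2\bigr)$, i.e.\ an energy \emph{inequality} with a possibly strict gap, and ruling out that gap is precisely the hard part of the theorem; it cannot be assumed. The paper avoids the quadratic terms altogether: it tests with $v_h\eqldef u_h+D_h(v_\eta-\bar u)$, so that $v_h-u_h=D_h(v_\eta-\bar u)$ is a \emph{fixed}, strongly convergent sequence; the inequality \eqref{pt7} then pairs the weakly-$*$ convergent $u_{h,t}$, $u_{h,x}$ only against strongly convergent functions, no quadratic term ever appears, and the identification $\bar u=u$ is obtained first. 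Only afterwards does the paper pass to the limit in \eqref{ch3h} and use \eqref{eq:energy2_ch3} --- now legitimately, for $u$ --- to upgrade to strong convergence, which is the step your last paragraph reproduces.

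Two further gaps, one of which explains the paper's more elaborate test-function construction. First, your competitor $I_hv$ need not be admissible: for $v\in\calK$ one only has $v_t\in\mathrm{L}^2(0,T;\calH)$, so the nodal traces $v(kh,\cdot)$ need not belong to $\mathrm{H}^1(0,T)$ and $(I_hv)_t$ may fail to exist in $\mathrm{L}^2$. This is why the paper first regularizes in time (the Steklov average inside $v_\eta$, whose time derivative is a difference quotient and hence $\calV$-valued), and, because its test function is shifted by $u_h-\bar u$ at the contact node, adds the lifting $k(\eta)(L-x)\psi(t)$ so that the constraint holds with the strict margin \eqref{pt4}. Second, your a priori bound on $u_{h,t}$ in $\mathrm{L}^\infty(0,T;\calH)$ does not follow from $w_h\ge c_w$ alone: the weight vanishes on the contact element $(0,h)$ (one has $w_0=0$ by construction), and the kinetic energy there must be controlled through \eqref{fucik}, i.e.\ $u_0=u_1^+$, as in \eqref{pt2}--\eqref{pt3}. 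These two defects are repairable; the circularity in the energy argument is not, within the scheme you propose.
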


\begin{proof}
The energy relation \eqref{ch3h} and the Gronwall lemma imply the existence of 
a constant $C>0$
independent of $h$ such that 
\begin{equation} \label{pt1}
\sup_{h > 0} \sup_{\tau\in[0,T]}\int_0^L(\abs{u_{h,t}(x,\tau)}^2 w_h(x) 
+ \abs{u_{h,x}(x,\tau)}^2)\dd x \le C\,.
\end{equation}
We have
\begin{equation} \label{pt2}
\int_0^h \abs{u_{h,t}(x,\tau)}^2\dd x = \int_0^h \abs{\dot u_0(\tau)\varphi_0(x)
+ \dot u_1(\tau)\varphi_1(x)}^2\dd x\,,
\end{equation}
hence, by virtue of \eqref{fucik},
\begin{equation} \label{pt3}
\int_0^h \abs{u_{h,t}(x,\tau)}^2\dd x \le C\int_0^L\abs{u_{h,t}(x,\tau)}^2 w_h(x)\dd x
\end{equation}
with a constant $C$ independent of $h$. We thus have
\begin{equation*}
\sup_{h > 0}  \sup_{\tau \in[0,T]}( \norm[\calV]{u_h(\cdot,\tau)}{+}
\norm[\calH]{u_{h,t}(\cdot,\tau)})\leq C\,. 
\end{equation*}
\\
Let us define 
 \(\mathcal{W}\eqldef\{u\in \L^\infty(0,T;\V): u_t\in \L^\infty(0,T;\H) \}\) 
endowed with the norm
\(\norm[\mathcal{W}]{u}\eqldef
{\textrm{ess}}\sup_{t\in[0,T]}
\bigl(
\norm[\V]{u(\cdot,t)}+\norm[\H]{u_t(\cdot,t)}
\bigr)\).
We conclude that there exists $\bar u \in \mathcal{W}$ and a subsequence, 
still denoted by $u_h$, such that 
\begin{subequations}
\label{eq:conv1}
\begin{align}
u_h&\rightharpoonup \bar u \quad \text{ in }\quad\L^\infty(0,T;\calV) \quad \text{ weak-*}\,,\\
u_{h,t}& \rightharpoonup \bar u_t \quad \text{ in }\quad\L^\infty(0,T;\calH) \quad \text{weak-*}\,.
\end{align}  
\end{subequations}
 Then, we may deduce from \eqref{eq:conv1} that 
\begin{equation}
\label{eq:conv2}
 u_h\rightharpoonup \bar u \quad \text{ in } \quad\mathcal{W}\quad \text{ weak-* }\,.
\end{equation}
Notice that for all \(\alpha < \frac{1}{2}\), we have 
$\mathcal{W}\hookrightarrow \mathrm{C}^{0,\frac{1}{2}}(Q_T)
\hookrightarrow \hookrightarrow  \mathrm{C}^{0,\alpha}(Q_T) $ hold 
(see~\cite{SchBer89NAWE}), where  
$\hookrightarrow $ and $\hookrightarrow \hookrightarrow$ denote the continuous
and compact embeddings,  respectively. Finally we find
\begin{equation}
\label{eq:conv3}
 u_h \rightarrow \bar u\quad \text{ in }\quad \mathrm{C}^{0,\alpha}(Q_T) 
\end{equation}
for all  \(\alpha < \tfrac{1}{2}\). Furthermore, \(u_h\) 
and \(\bar u\) belong to \(\calK\).
Our aim  is to establish that $\bar u = u$, that is, the limit \(\bar u\) coincides with the solution
$u$ of \eqref{wf_ch3}. However 
the elements of \(\calK\) are not smooth enough in time, then they should be approximated
before being projected onto \(\calV_h\).  Indeed this projection violates 
the constraint at \(x =0\), and therefore, the elements 
of $\calK$ need another approximation in order to satisfy the constraint strictly. 

Assume that $v\in\calK$ is an admissible test function for \eqref{wf_ch3}, that is, $v=\bar u$
for $t \geq T - \zeta$. 
For $\eta\leq \zeta/4$ we define an auxiliary function
\begin{equation*}
 v_{\eta}(x,t)\eqldef
 \begin{cases}
\displaystyle{
\bar u(x,t) + \frac{1}{\eta}\int^{t+\eta}_t(v(x,s)-\bar u(x,s)})\dd s + k(\eta)(L-x) \psi(t)
&\text{ if } \quad t\leq T-\eta,\\
\bar u(x,t) &\text{ if } \quad t > T-\eta,
\end{cases}
\end{equation*}
where $\psi$ is a smooth and positive function with the property
\(\psi=1\) on \([0,T -\eta/2]\) and 
\(\psi=0\)  on \([T-\eta/4, T]\).  We precise now how 
the parameter \(k(\eta)\) to ensure that \(v_{\eta}\in\calK\cap \L^{\infty}(0,T;\calV)\)
holds. 
Since $\bar u\in\mathrm{C}^{0,\frac{1}{2}}(Q_T)$, it follows that
there exists  a constant $C^*>0$ such that 
\begin{equation*}
\begin{aligned}
 &\Bigl{|} \bar u(0,t) - \frac1{\eta}\int_t^{t+\eta} \bar u(0,s)\dd s \Bigr{|} \leq 
\frac1{\eta} \int_t^{t+\eta} \abs{\bar u(0,t) - \bar u(0,s)} \dd s \\ &\leq 
\frac{C^* \norm[\mathcal{W}]{\bar u}}{\eta}\int_0^{\eta}\sqrt{s} \dd s = 
\frac{2}{3}C^* \norm[\mathcal{W}]{\bar u}\sqrt{\eta}.
\end{aligned}
\end{equation*}
Then for all $t\leq T- \eta/2$, we obtain 
\begin{equation*}
 v_\eta (0,t)\geq \frac1{\eta}\int_t^{t+\eta}v(0,s)\dd s-\frac{2}{3}C^*
 \norm[\mathcal{W}]{\bar u}\sqrt{\eta} + k(\eta)L \psi(t).
\end{equation*}
The choice  $k(\eta)=\frac{5}{3L}C^* \norm[\mathcal{W}]{\bar u}\sqrt{\eta} $
ensures that  for all \(t\leq T- \tfrac{\eta}{2}\), we get
 \begin{equation}\label{pt4}
  v_\eta (0,t)\geq C^* \norm[\mathcal{W}]{\bar u}\sqrt{\eta}.
 \end{equation}
Let $D_h : \mathrm{V} \to \mathrm{V}_h$ be the piecewise linear 
interpolation mapping defined by the formula
\begin{equation}\label{pt5}
D_h(z)(x) = z((i-1)h) + \frac{1}{h} (x - (i-1)h)(z(ih) - z((i-1)h))
\end{equation}
for $z \in \mathrm{V}$ and $x \in [(i-1)h, ih)$, continuously extended to $x = mh = L$.
From the Mean Continuity Theorem it follows that
\begin{equation}\label{pt6}
\lim_{h \to 0} \|D_h(z) - z\|_{\mathrm{H}} = 0
\quad\text{and}\quad\lim_{h \to 0} \|D_h(z) - z\|_{\mathrm{V}} = 0\,.
\end{equation}
The next step consists in choosing an adequate test function. Let us define
\begin{equation}
\label{test_v_h_ch3}
v_h (\cdot,t)\eqldef u_h(\cdot,t) + D_h(v_\eta - \bar u)(\cdot,t)
\end{equation}
for all  \(t \in [0,T]\). We have $v_h (0,t) =u_h(0,t)+ v_\eta(0,t) - \bar u(0,t)$,
and from \eqref{eq:conv3}--\eqref{pt4} it follows that
\(v_h\in\calK\cap \calV_h\), for all \(t\) provided \(h\) is
small enough.

Introducing \eqref{test_v_h_ch3} into \((\mathrm{P}_{\mathrm{var}}^{\rm{mod}})\)
and integrating by parts, it comes that
\begin{equation}
\begin{aligned}\label{pt7}
& -\int^L_0 \bar u_{h,t}(x,0) D_h(v_\eta - \bar u)(x,0)w_h(x)\dd x 
-\int_{Q_T} \bar u_{h,t}(x,t) D_h(v_{\eta,t} - \bar u_t)(x,t)w_h(x)\dd x\dd t\\&
 + \int_{Q_T} \bar u_{h,x}(x,t)(D_h(v_\eta - \bar u)_x(x,t)\dd x \dd t \geq 
 \int_{Q_T}f(x,t)
D_h(v_\eta - \bar u)(x,t)w_h(x)\dd x \dd t.
\end{aligned}
\end{equation}
By \eqref{pt6}, we have for $h \to 0$ the strong convergences
\begin{subequations}
\label{eq:conv4}
\begin{align}
D_h(v_{\eta,t} - \bar u_t) &\rightarrow v_{\eta,t} - \bar u_t
  \quad \text{in}\quad \L^2(0,T;\H),\\
D_h(v_{\eta,t} - \bar u_t) &\rightarrow  v_{\eta} -  \bar u\quad \text{in}\quad\L^2(0,T;\V). 
\end{align}
\end{subequations}
We now use \eqref{eq:conv1} and \eqref{eq:conv4}
to pass to the limit as \(h\to 0\) in \eqref{pt7} and find that
\begin{equation}\label{pt8}
\begin{aligned}
& -\int^L_0 v^0(v_\eta - \bar u)(x,0)\dd x 
-\int_{Q_T} \bar u_t(x,t)(v_{\eta,t} - \bar u_t)(x,t)\dd x\dd t\\&
 + \int_{Q_T} \bar u_x(x,t)(v_{\eta,x} - \bar u_x)(x,t)\dd x \dd t \geq 
 \int_{Q_T}f(x,t)(v_\eta - \bar u)(x,t)\dd x \dd t\,.
\end{aligned}
\end{equation}
The passage to the limit in \eqref{pt8} as $\eta \to 0$ is easy, and we conclude that $\bar u = u$
is the desired solution of \eqref{wf_ch3}.

It remains to prove that \(u_h\) converge strongly in $\mathcal{V}$.
Passing to the limit as $h \to 0$ in \eqref{ch3h} we obtain for a.\,e. $\tau\in (0,T)$ that
\begin{equation}
\label{eq:energy3_ch3}
\begin{aligned}
&\lim_{h\rightarrow 0}
\int_0^L(\abs{u_{h,t}(x,\tau)}^2w_h(x){+}\abs{u_{h,x}(x,\tau)}^2)\dd x\\&=
\int_0^L(\abs{v^0(x)}^2{+}\abs{{u_x^{0}}(x)}^2)\dd x+\int_{Q_{\tau}}f(x,t)u_t(x,t)\dd x\dd t\\&
= \int_0^L(\abs{u_t(x,\tau)}^2{+}\abs{u_x(x,\tau)}^2)\dd x.
\end{aligned}
\end{equation}
Since \(u_h \rightharpoonup u\) weakly-* in $\mathcal{W}$ and $w_h \to 1$ strongly
in every $\mathrm L^p$ with $p < \infty$, we conclude that $u_{h,t}\sqrt{w_h}$
converge weakly to $u_t$ in $\mathrm{L}^2(0,T;\mathrm{H})$. Since the norms
of $(u_{h,t}\sqrt{w_h}, u_{h,x})$ in $\mathrm{L}^2(0,T;\mathrm{H})$ converge to the norm
of $(u_t, u_x)$ in $\mathrm{L}^2(0,T;\mathrm{H})$, we see that $u_h$ converge strongly to
$u$ in $\mathcal{V}$. The limit solution $u$ is unique, hence the whole system $\{u_h: h>0\}$
converges to $u$ as $h\to 0$, which completes the proof.
\end{proof}


\section{The wave equation with Signorini and Dirichlet boundary conditions}
\label{waveeq_ch3}  

We consider a bar of  length $L=1$ clamped at one end and compressed at $t=0$. 
The bar  elongates under the elasticity effect; as soon as it reaches 
a rigid obstacle at time \(t_1\) then it stays in contact during the time \(t_2-t_1\) 
and it takes off at time \(t_2\), see Figure~\ref{square_ch3}.
This problem can be described mathematically  
by \eqref{wave_ch3}--\eqref{bound_cond_ch3} with the density of external forces 
\(f(x,t)=0\). We first describe how an analytical piecewise affine and periodic solution
to our problem can be obtained by using the characteristics method, the reader 
is referred to~\cite{daba13} for a detailed explanation. 
Then approximate solutions for some time-space discretizations   
and for several mass redistributions are exhibited and their efficiency are discussed. 

\subsection{Analytical solution}
\label{expsol2_ch3}

The domains considered here are defined by
\((0,L)\times (t_i,t_{i+1})\), \(i=0, 1, 2\), corresponding to 
the phases before, during and  after the impact, respectively. Each of them are divided into
four regions as it is represented on Figure~\ref{square_ch3}. 
We choose below \(t_i=i\), \(i=0,\ldots, 3\).
\begin{figure}[!ht]
\centering
\includegraphics[scale=0.8]{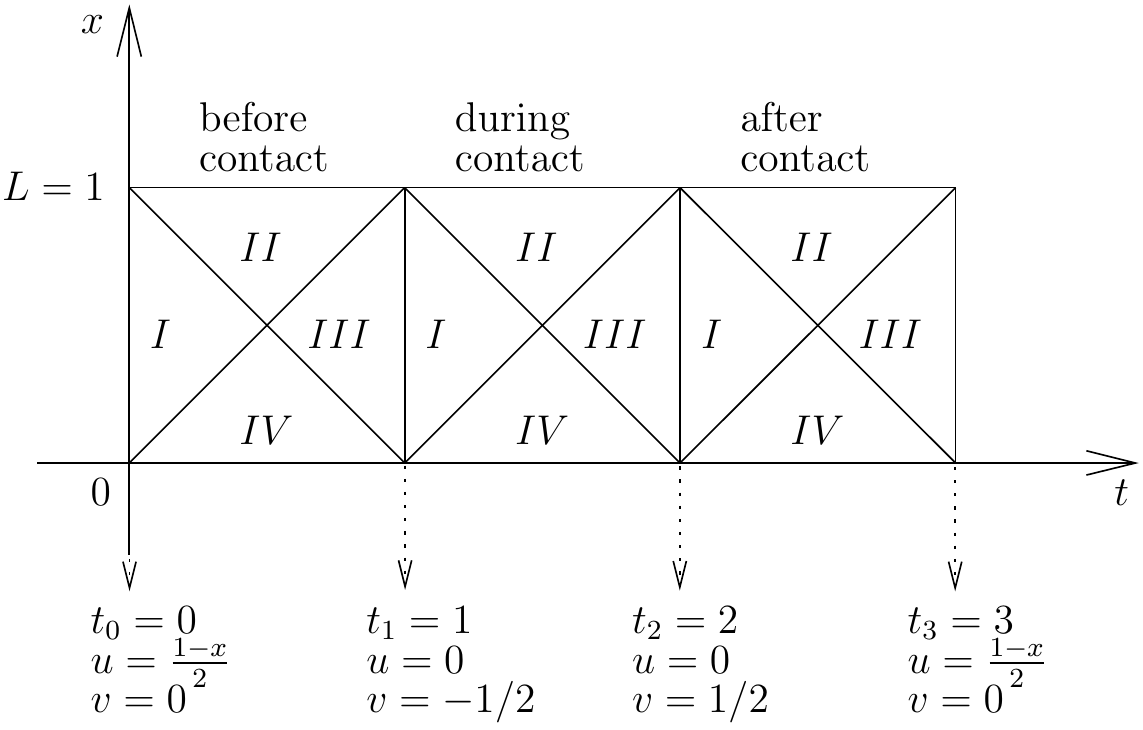}
\caption{The regions allowing to determine the value of u.}
\label{square_ch3}
\end{figure}

\noindent The domain $(0,1) \times (0,1)$ corresponding to the phase before the impact
is split into four regions according to the characteristics lines $x+t$ and $x-t$. Therefore
\begin{equation}
\label{eq:Ch3_2}
u(x,t)= 
\begin{cases}
{\frac{1{-}x}2} \text{ in the regions I, III}, \\
{\frac{1{-}t}2} \text{ in the regions II, IV}. 
 \end{cases}
 \end{equation}
 The domain $(1,2)\times (0,1)$ corresponding to the phase during the impact is also
 divided into four regions.  Then the solution \eqref{eq:Ch3_2} evaluated in the region IV allows us 
 to infer that  $u_t(\cdot,1)=-\frac{1}{2}$ and we conclude that
\begin{equation}
\label{eq:Ch3_4}
u(x,t)= 
 \begin{cases}
{\frac{t{-}t_1}2} \text{ in the region I}, 
{-\frac{x}{2}} \text{ in the region II},\\  
{\frac{x{-}1}2} \text{ in the region III}, \\
{\frac{t{-}t_1{-}1}2} \text{ in the region IV}.
 \end{cases}
 \end{equation}
The domain $(2,3)\times (0,1)$ corresponding to the phase after the impact is split 
into four regions. 
By using \eqref{eq:Ch3_4}, we get 
$u(\cdot,2)=0$ and $u_t(\cdot,2)=\frac{1}{2}$ which leads to
\begin{equation*}
u(x,t)=
\begin{cases} 
{\frac{t{-}t_2}2}  \text{ in the regions I, II}, \\
{\frac{1{-}x}2}  \text{ in the regions III, IV}.\\
 \end{cases}
 \end{equation*}
Since $u(\cdot,3)= u^0$ and $u_t(\cdot,3)= v^0$, the solution \(u(x,t)\) is periodic of period $3$.
Finally, note that \(\lambda=u_x(0,\cdot)\).

\subsection{Comparisons between different mass redistributions 
for some time-space discretizations}
 \label{numex2_ch3}

The time discretization is introduced in this section.  To this aim, we divide the time interval 
\([0,T]\) by \(n+1\) discrete time-points such that \(0=t_0<t_1<\ldots<t_n=T\). Let
\(U_h^n\), \(\dot{U}_{h}^n\), \(\ddot{U}_{h}^n\) and
\(\lambda^n\) be the approximations of the displacement \(U_h(t_n)\), the velocity 
\(\dot{U}_{h}(t_n)\), 
the acceleration \(\ddot{U}_{h}(t_n)\) and the Lagrange multiplier \(\lambda(t_n)\), respectively. 
We deal with some approximate solutions to Problem \eqref{wave_ch3}--\eqref{bound_cond_ch3} 
obtained by using several time-stepping methods like 
the Newmark, backward Euler and Paoli-Schatzman methods.
For each of these time-stepping methods,  approximate
solutions \((U_h^n,\lambda^n)\) are exhibited for several mass 
redistributions and they are compared to the analytical solution \((u, \lambda)\) 
introduced in Section \ref{expsol2_ch3}. In the numerical experiments presented below,
we distinguish the cases that the mass of the contact node that is not redistributed
(Mod~1), or uniformly redistributed on all the other nodes (Mod~2), or redistributed 
only on the nearest neighbor (Mod~3), according to the classification given in the previous section.
We show that the
efficiency of the mass redistribution method depends 
on the position of the nodes where the mass is redistributed. 
Indeed, the numerical experiments highlight that the closer from the contact node the 
mass is transferred better the approximate solutions are obtained. Then,
it is not surprising that the best approximate solution can be expected 
and indeed it is obtained
in the case (Mod~3), where all the mass of the contact node is transferred 
on the node preceding the contact node, see Figures~\ref{11}, \ref{21} and \ref{31}.
Note that the numerical simulations presented below were performed 
by employing the finite element library Getfem${++}$ (see \cite{REGET++}).  

\subsubsection{The Newmark methods}  
\label{Newmark}

The Taylor expansions of displacements and velocities neglecting terms 
of higher order are the underlying concept of the family of Newmark methods,
see \cite{NEM59}. These methods are 
unconditionally stable for linear elastodynamic problem for $\gamma \geq \tfrac{1}{2}$
and $\beta \geq \tfrac{1}{4}(\tfrac{1}{2}{+}\gamma)^2$, see \cite{Hug87, Krenk06}, but
they are also the most popular time-stepping schemes used to solve contact problems.
The discrete evolution for the contact problem 
\eqref{wave_ch3}--\eqref{bound_cond_ch3} is described
by the following finite difference equations:
\begin{equation}
\label{Newmark method3_ch3}
\begin{cases}
\text{find } U_h^{n+1} : [0,T] \rightarrow  \mathbb{R}^{m}\text{ and } 
\lambda^{n} : [0,T] \rightarrow \mathbb{R} \text{ such that:}\\
U_h^{n+1}= U_h^n +\Delta t \dot U_{h}^{n}+
\bigl(\frac{1}{2}{-}\beta\bigr)\Delta t^2 \ddot{U}_{h}^{n}+ 
\beta \Delta t^2\ddot{U}_{h}^{n+1},\\
\dot{U}_{h}^{n+1}=\dot{U}_{h}^n +(1{-}\gamma)\Delta t \ddot{U}_{h}^{n}
+\gamma \Delta t\ddot{U}_{h}^{n+1},\\
M \ddot{U}_{h}^{n+1}+S U_h^{n+1}=-\lambda^{n+1} e_0+F^{n+1},\\
0\leq u_0^{n+1}\perp \lambda^{n+1} \leq 0,
\end{cases} 
\end{equation}
where \(\Delta t\) is a given times step and \((\beta,\gamma)\) are
the algorithmic parameters, see \cite{Hug87,LauCCIM03}.
Note that \(U_h^0\), \(\dot{U}_{h}^0\) and
\(\lambda^0\) are given and \(\ddot{U}_{h}^0\) is evaluated by using
the third identity in \eqref{Newmark method3_ch3}.  
We are particularly interested in the case where \((\beta,\gamma)=(\frac14,\frac12)\). This method
is called the \emph{Crank-Nicolson method}, it is second-order
consistent and unconditionally stable in the unconstrained case. However the 
situation is quite different in the case of contact constraints, indeed the order of accuracy is
degraded; for further details, the reader is referred to \cite{Hug87,Krenk06, GRHA07}. 
The analytical solution \((u,\lambda)\) exhibited in Section
 \ref{expsol2_ch3} and the approximate solutions $(U^n_h , \lambda^n)$
 obtained for different mass redistributions are represented on
 Figure~\ref{11}.  The approximate solution obtained for the
  nearest neighbor redistribution (Mod~3 on Figure~\ref{11})
 gives much better accuracy than the mass redistribution on all the nodes preceding
 the contact node (see Mod~2 on Figure~\ref{11}) and no mass redistribution. This highlighted 
 that the choice for the mass redistribution plays a crucial role.

\begin{center}
 \begin{figure}[htb!]
   \begin{center}
      \subfigure{\includegraphics[height=5cm]
      {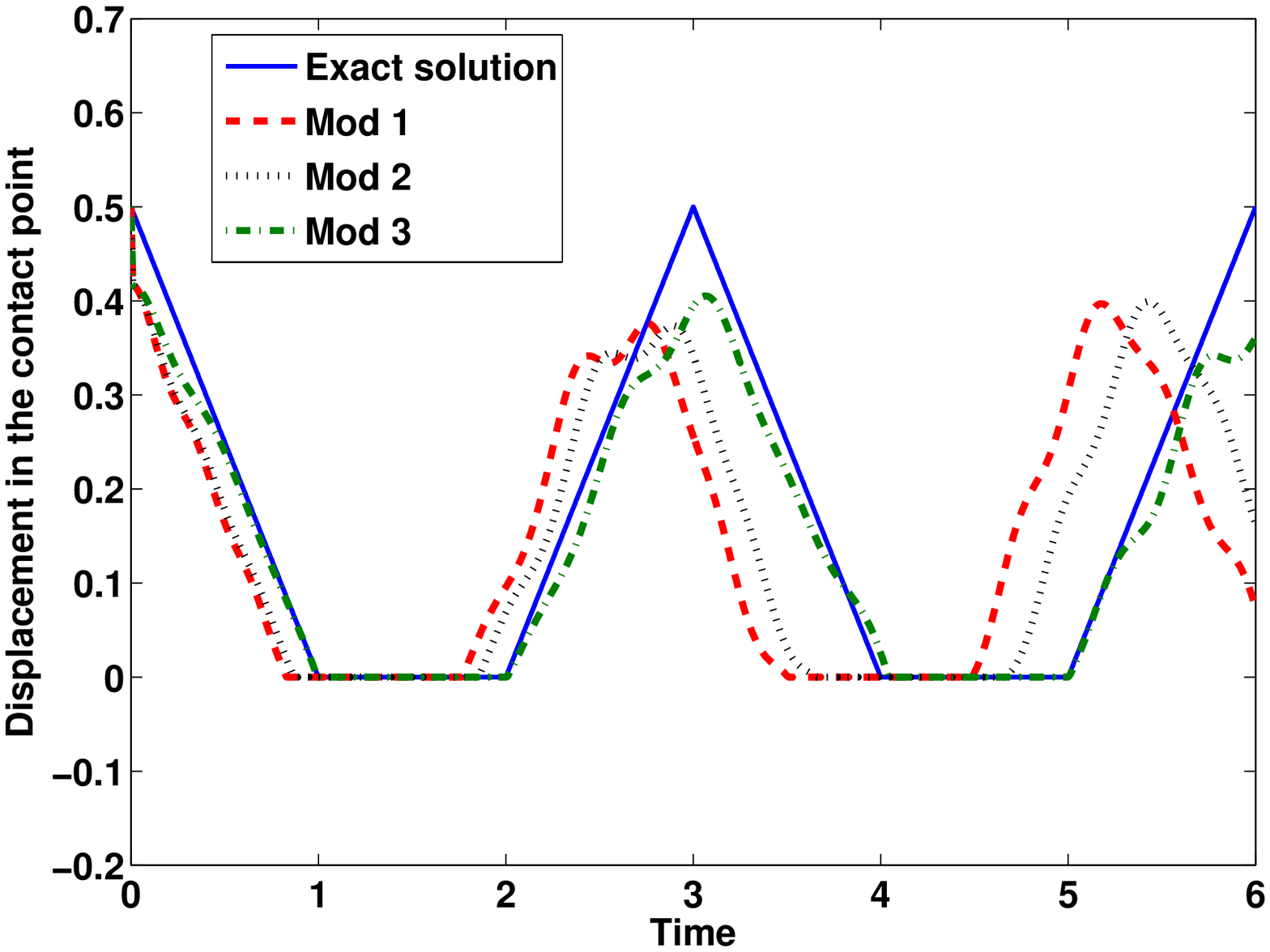}}\hspace*{-.3cm} 
       \subfigure{\includegraphics[height=5cm]
       {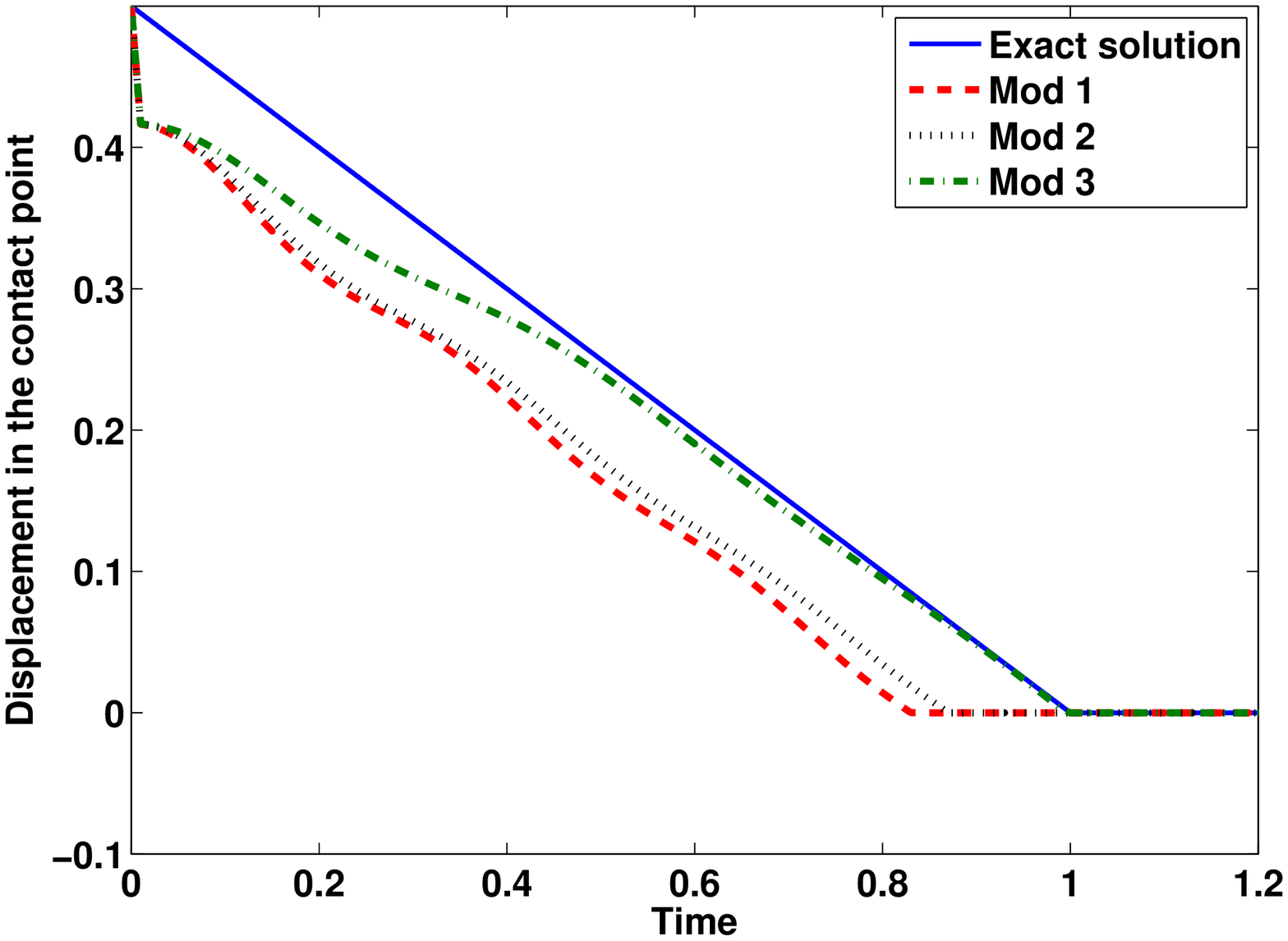}}
       \end{center}
        \vspace{-2em}
   \begin{center} 
      \subfigure{\includegraphics[height=5cm]
      {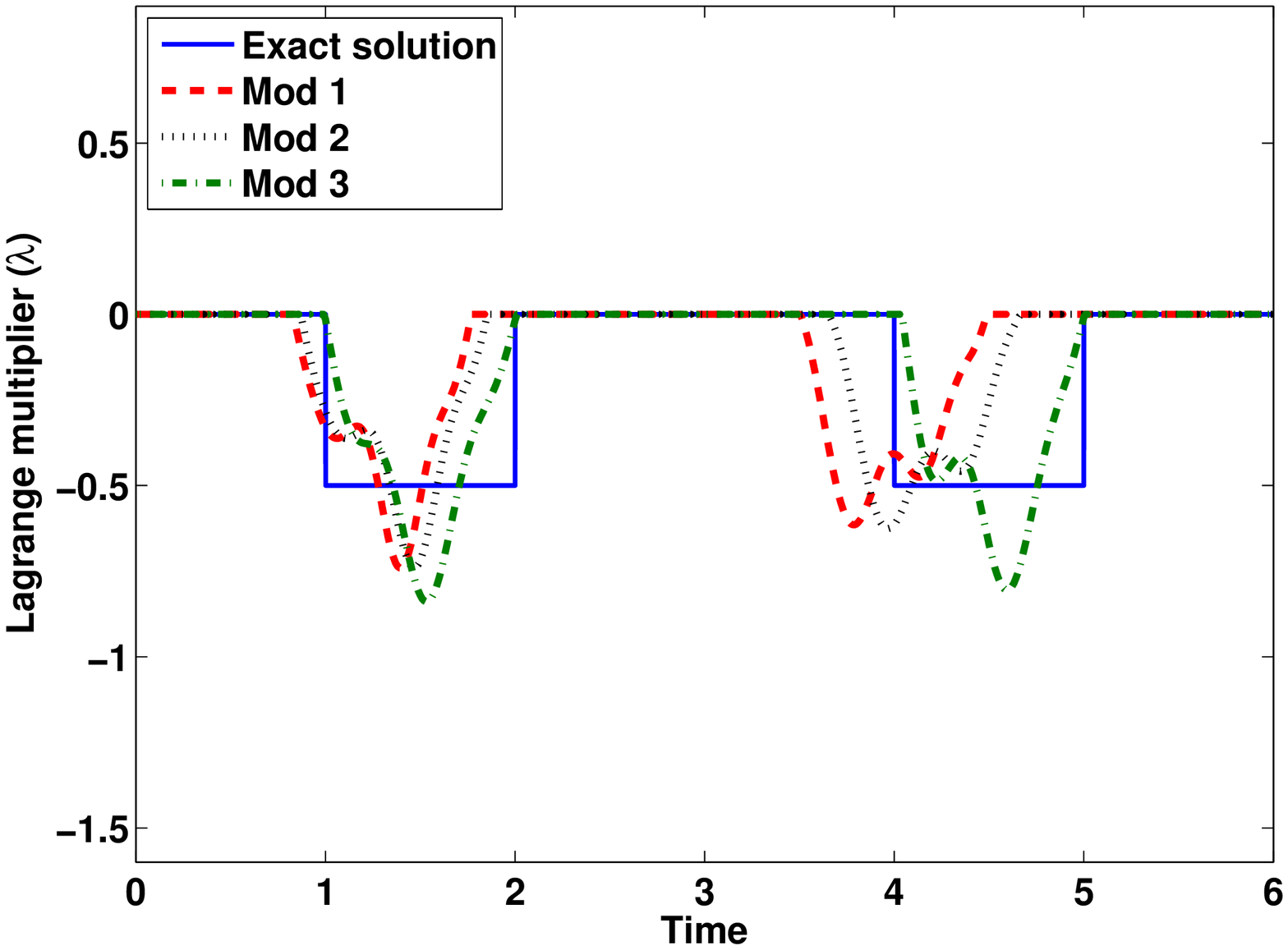}}\hspace*{-.3cm} 
       \subfigure{\includegraphics[height=5cm]
       {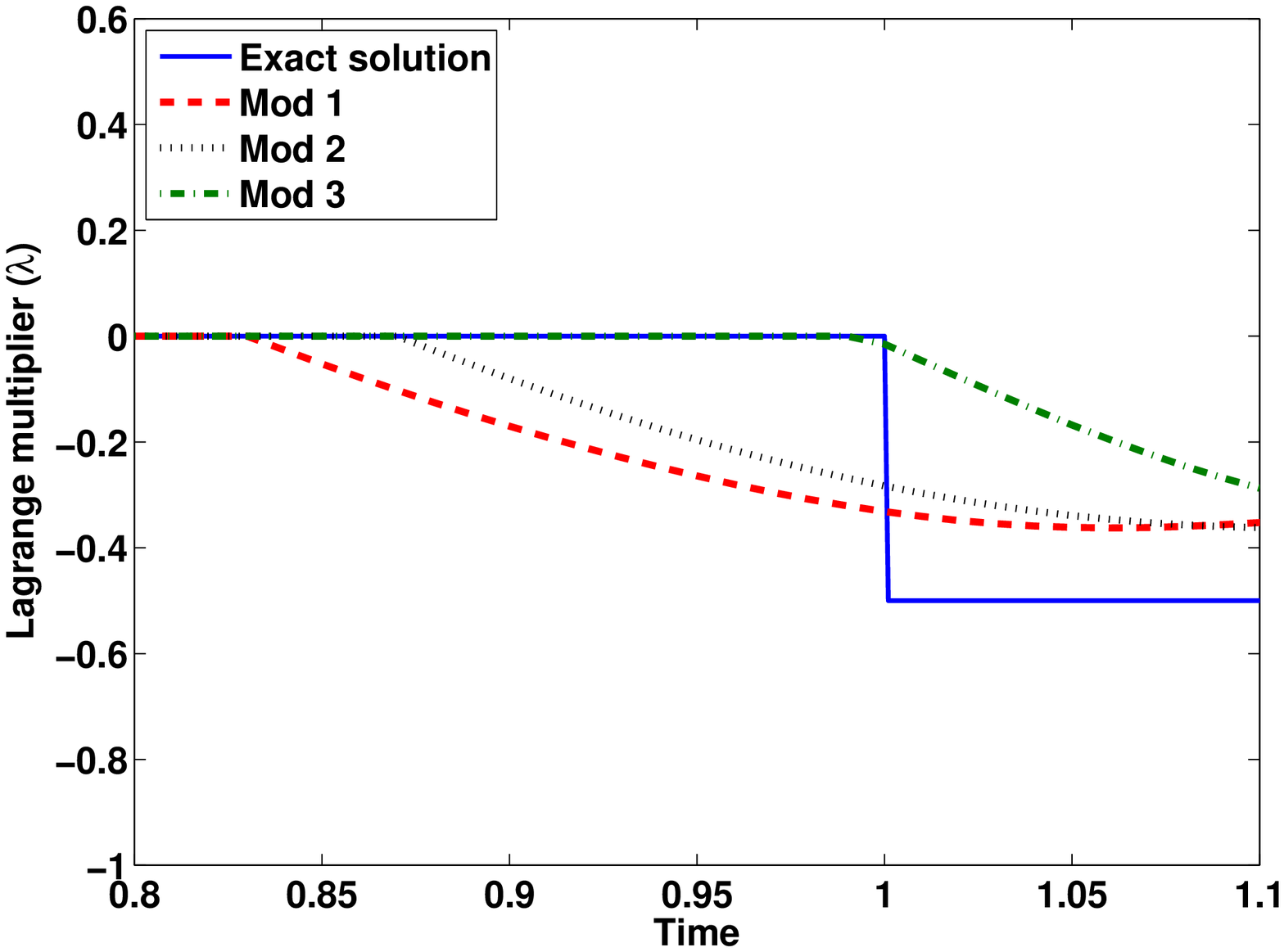}}
    \end{center}
    \vspace{-1.2em}
\caption{Comparison of analytical $(u,\lambda)$ and 
approximate $(U^n_h , \lambda^n)$ solutions 
for some modified mass matrices in the contact node with Crank-Nicolson method
($\Delta x = \frac{1}{6}$ and $\Delta t = \frac{1}{100}$).}
\label{11}
 \end{figure}
\end{center}

\begin{center}
 \begin{figure}[htb!]
   \begin{center}
      \subfigure{\includegraphics[height=5cm]{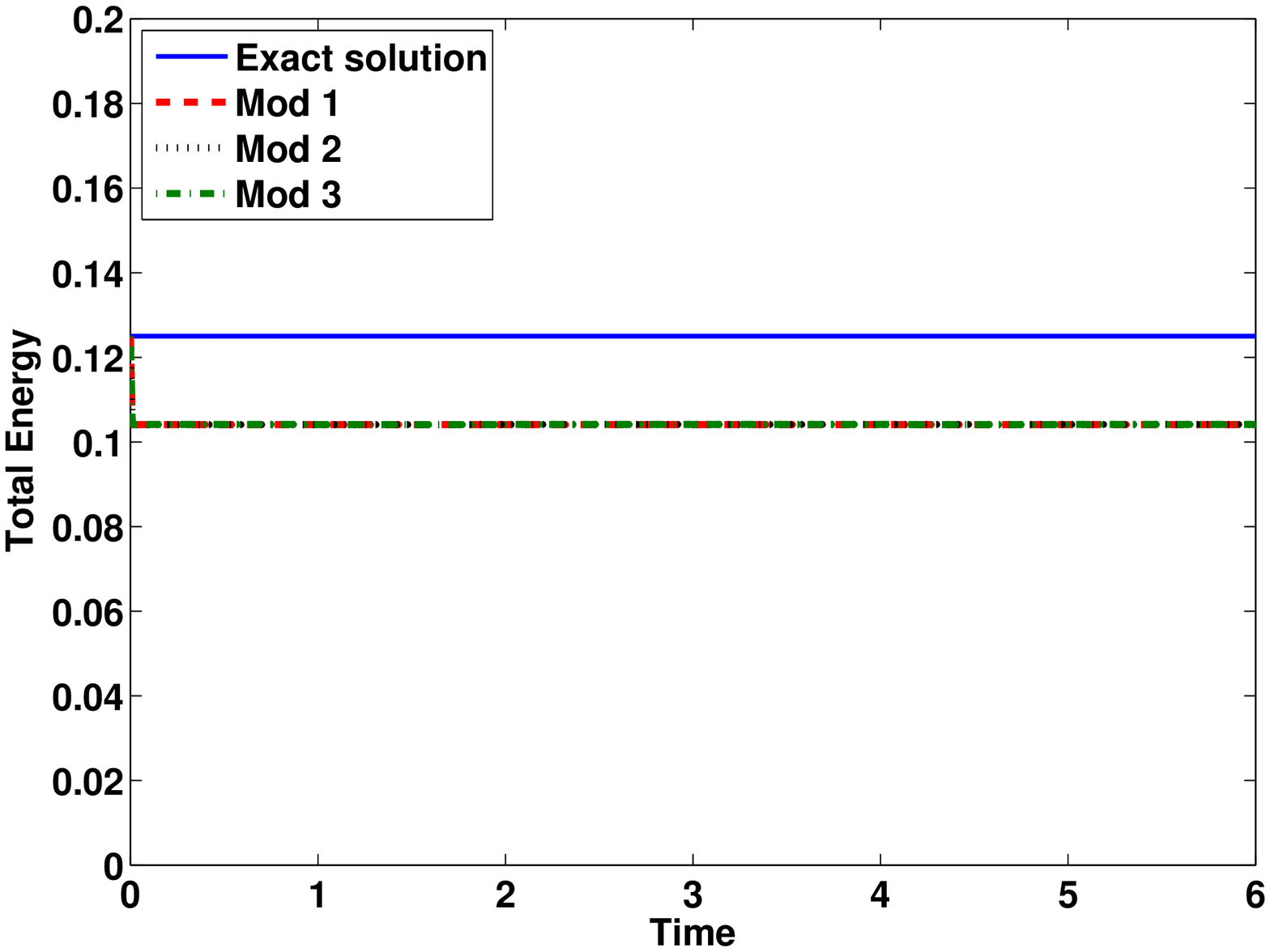}}
       \subfigure{\includegraphics[height=5cm]{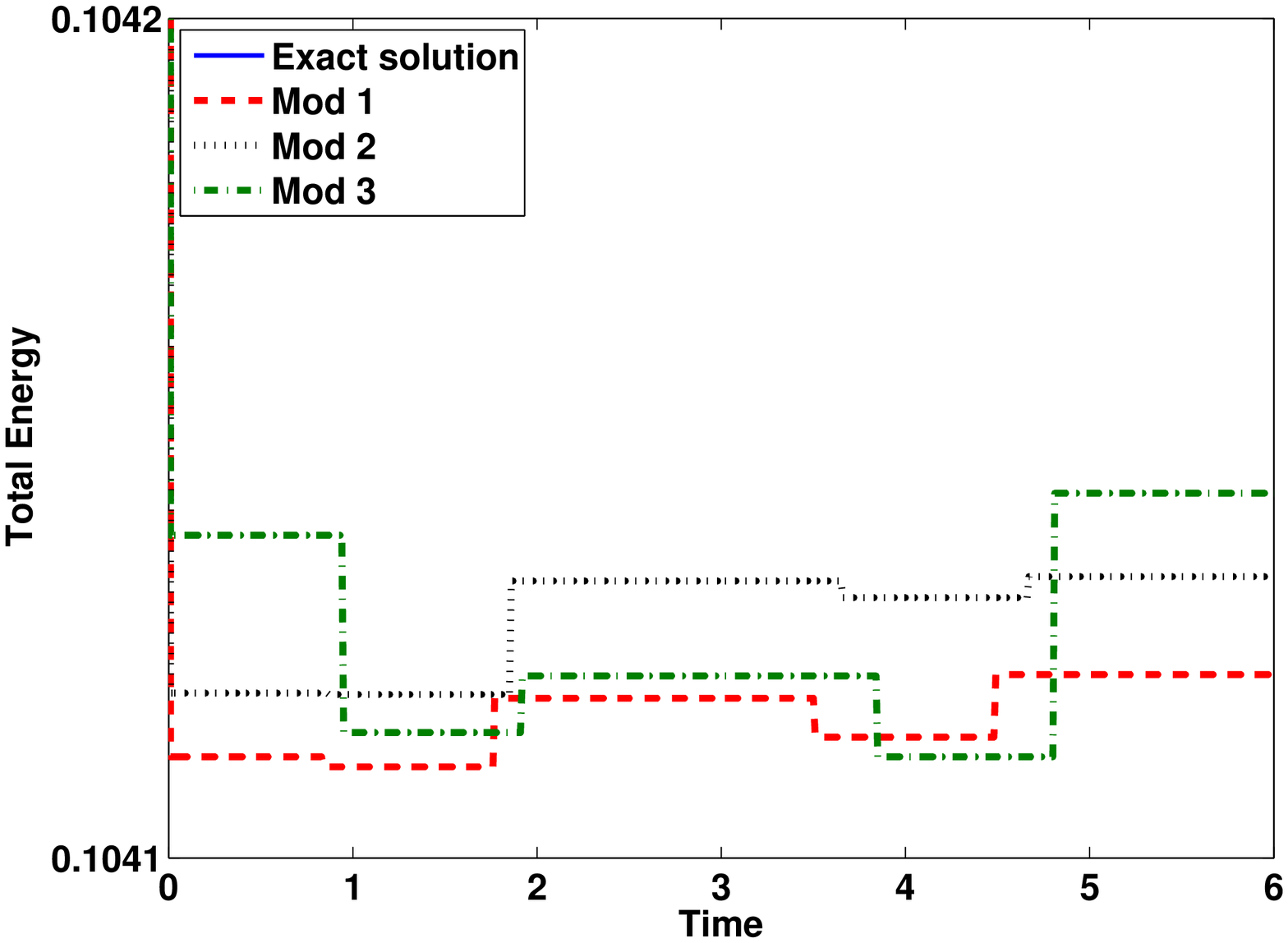}}
 \end{center}
 \vspace{-1.2em}
\caption{Comparison of the energy associated with analytical solution and
the energy associated with approximate solutions for some modified 
mass matrices with Crank-Nicolson method
($\Delta x = \frac{1}{6}$ and $\Delta t = \frac{1}{100}$). The figure on the right hand side
represents a zoom of the figure on the left hand side.}\label{12}
 \end{figure}
\end{center}

\newpage
\subsubsection{The backward Euler method}
\label{Euler_ch3}
We are concerned here with backward Euler's method which for the 
contact problem \eqref{wave_ch3}--\eqref{bound_cond_ch3} is described by the following 
finite difference equations:
\begin{equation}
\label{theta method3_ch3}
\begin{cases}
\text{find } U_h^{n+1} : [0,T] \rightarrow  \mathbb{R}^m\text{ and } 
\lambda^{n} : [0,T] \rightarrow \mathbb{R} \text{ such that:}\\
 U_h^{n+1}= U_h^n +\Delta t \dot{U}_{h}^{n+1},\\ 
 \dot{U}_{h}^{n+1}= \dot{U}_{h}^n +\Delta t \ddot{U}_{h}^{n+1},\\
M \ddot{U}_{h}^{n+1}+S U_h^{n+1}=-\lambda^{n+1} e_0+F^{n+1},\\
0\leq u_0^{n+1} \perp \lambda^{n+1} \leq 0.
\end{cases} 
\end{equation}
Note that \(U_h^0\),  \(\dot{U}_{h}^0\)
and \(\lambda^0\) are given and \(\ddot{U}_{h}^0\) is evaluated by using the third equality
in~\eqref{theta method3_ch3}.

 \begin{center}
 \begin{figure}[htp]
   \begin{center}
      \subfigure{\includegraphics[height=5cm]
      {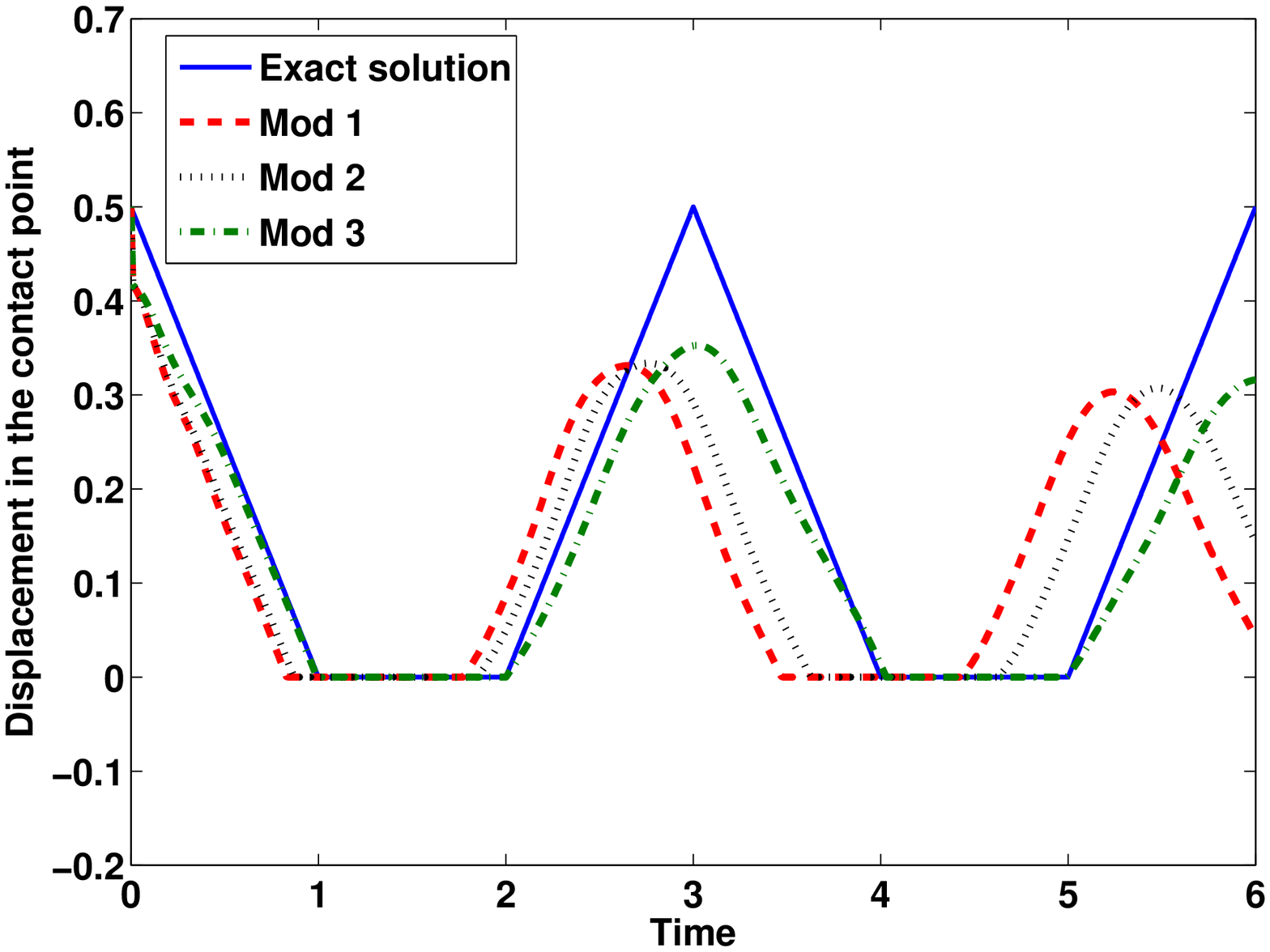}}\hspace*{-.3cm} 
       \subfigure{\includegraphics[height=5cm]{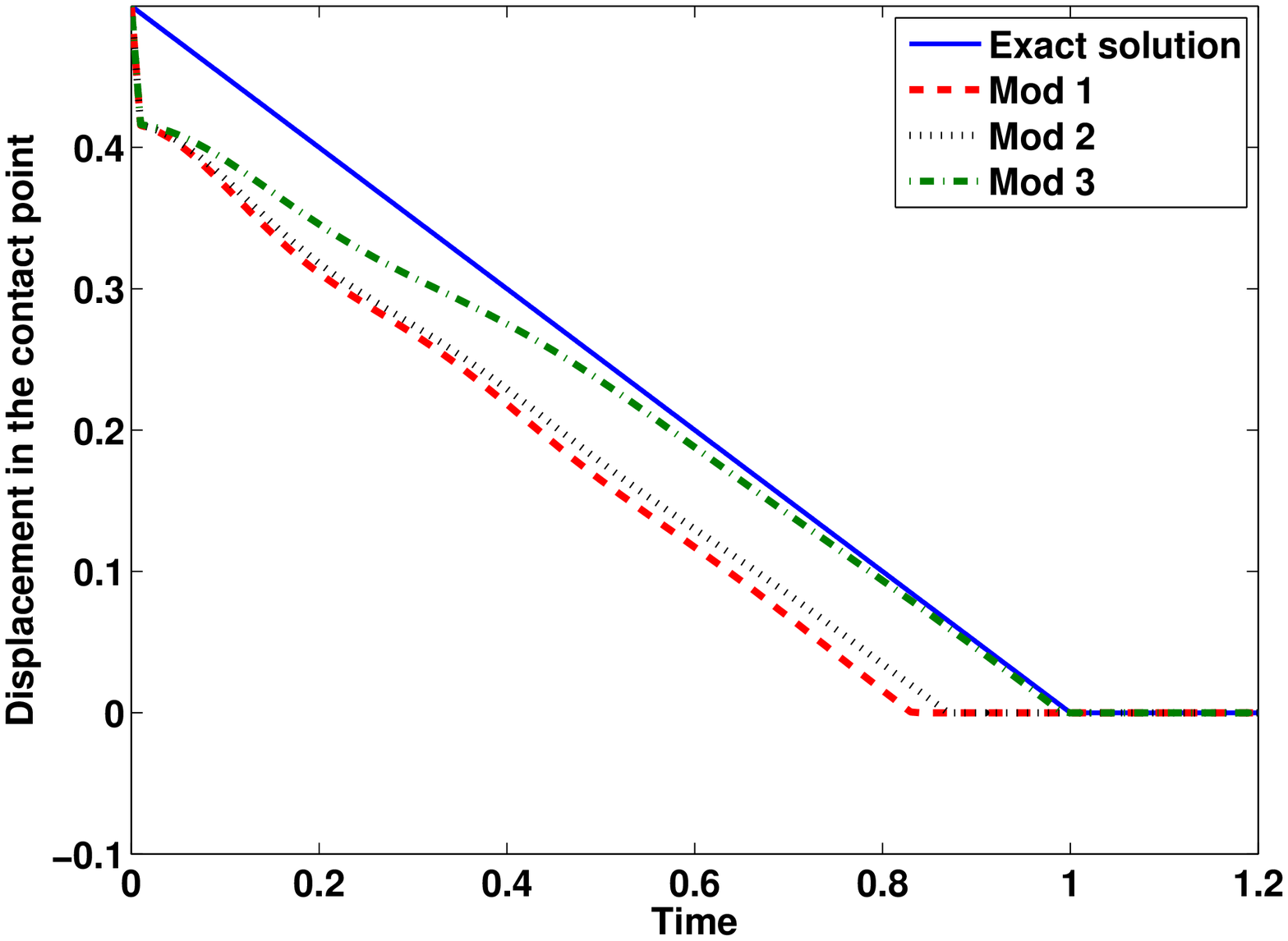}}
        \end{center}
        \vspace{-2em}
   \begin{center} 
    \subfigure{\includegraphics[height=5cm]
      {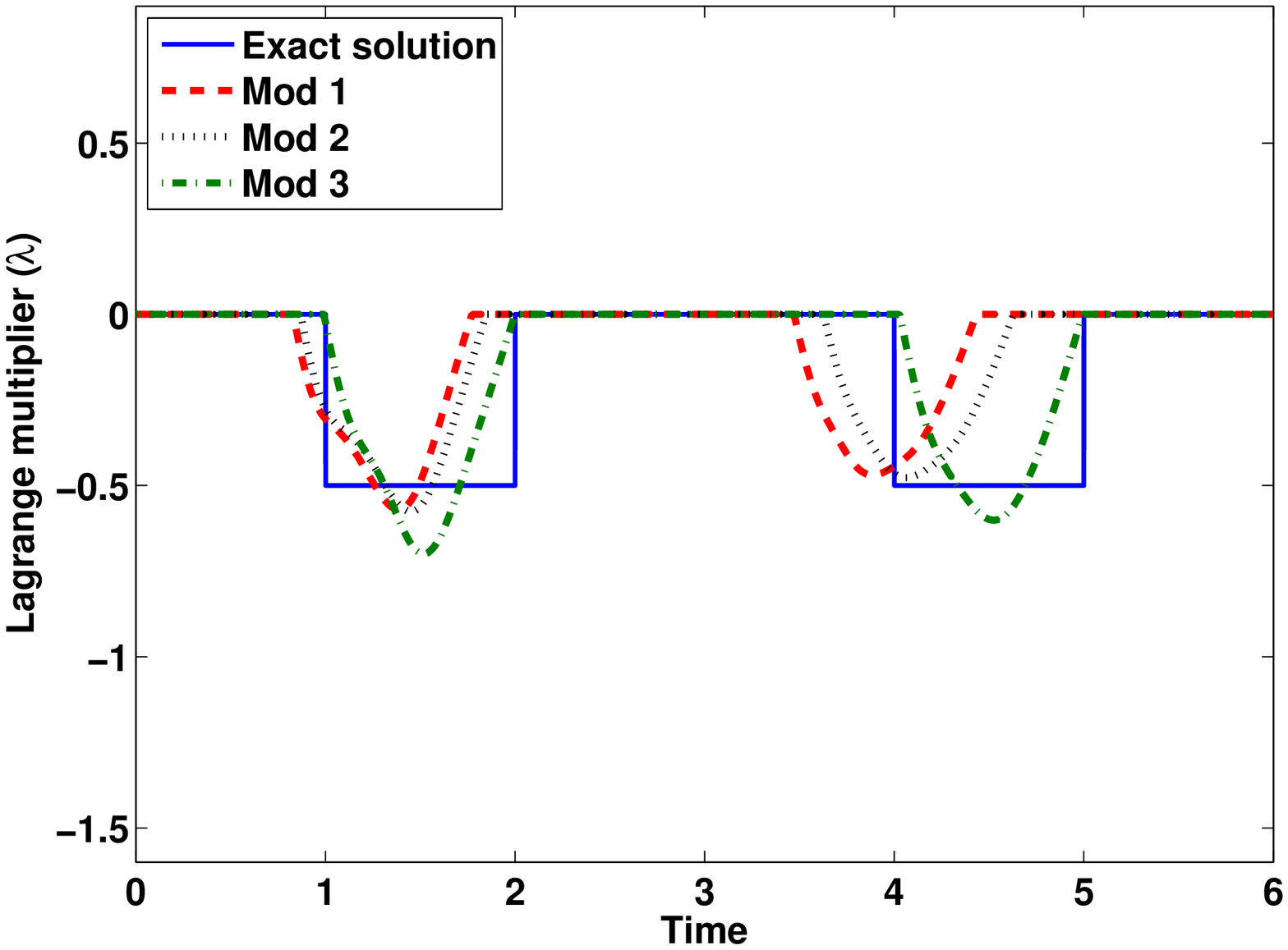}}\hspace*{-.3cm} 
       \subfigure{\includegraphics[height=5cm]{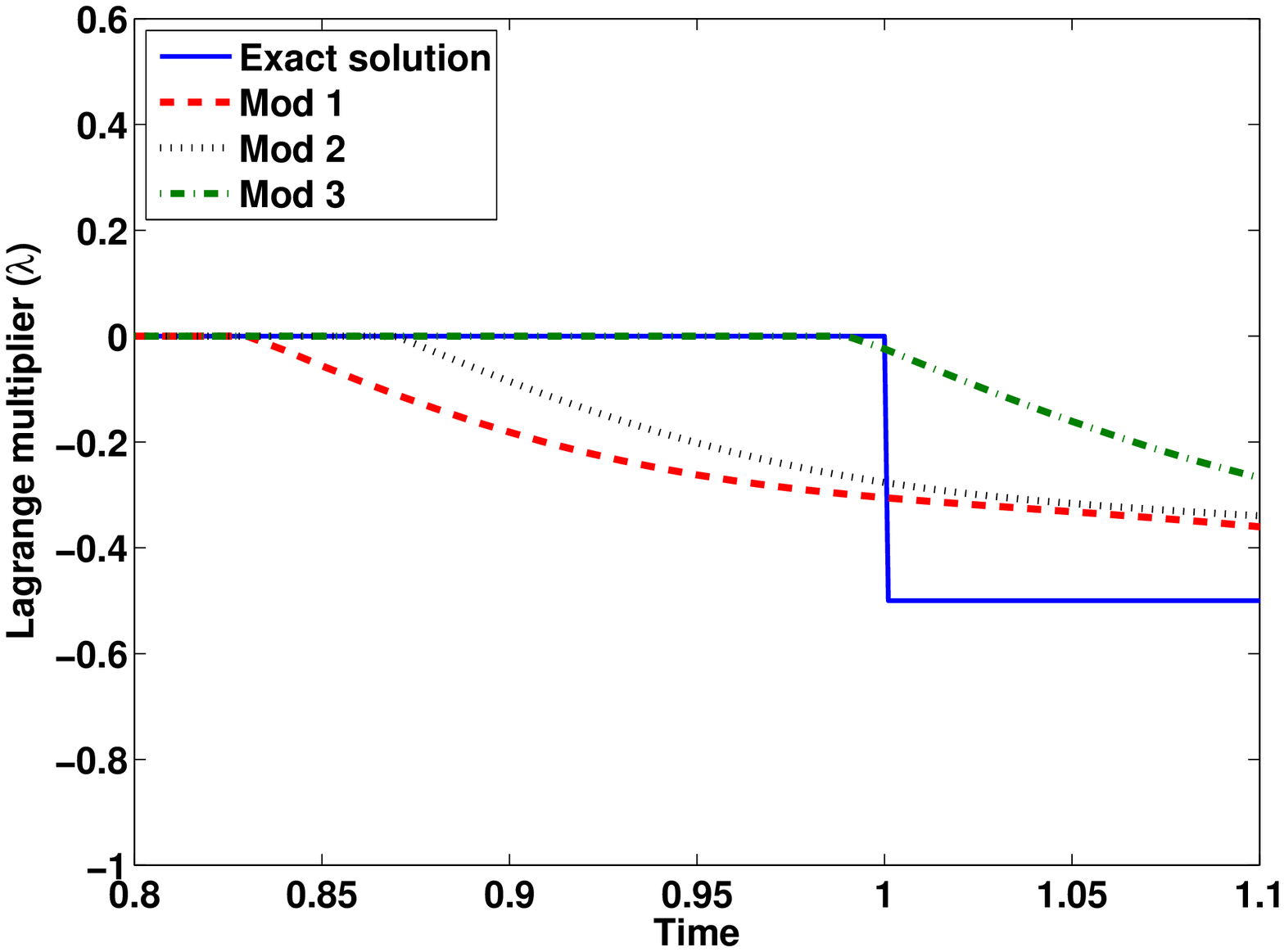}}
    \end{center}
   \vspace{-1.2em} 
\caption{Comparison of analytical $(u,\lambda)$ and 
approximate $(U^n_h , \lambda^n)$ solutions
for some modified mass matrices in the contact node with backward Euler method
($\Delta x = \frac{1}{6}$ and $\Delta t = \frac{1}{100}$).}\label{21}
 \end{figure}
\end{center}

\begin{center}
 \begin{figure}[htp!]
   \begin{center}
      \subfigure{\includegraphics[height=5cm]{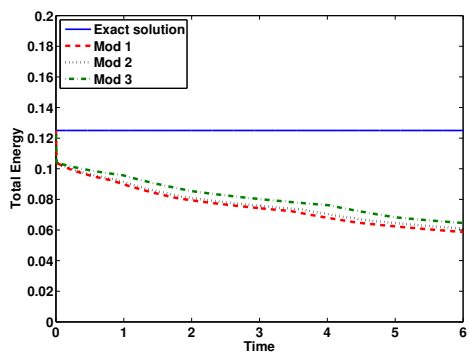}}
 \end{center}
\caption{Comparison of energy associated with analytical solution and
energy associated with approximate solutions for modified mass matrices with backward Euler method
($\Delta x = \frac{1}{6}$ and $\Delta t = \frac{1}{100}$).}\label{22}
 \end{figure}
\end{center} 

\subsubsection{The Paoli-Schatzman methods}
\label{Paoli_Schatz}

We focus on the so-called Paoli--Schatzman method
that consists to fix the contact constraint at an 
intermediate time step. Indeed the method proposed below is a slight
modification of Paoli-Schatzman method (see \cite{Paoli2001, PaoliSchat02})
which takes into account the kernel of the modified mass matrix. 
A simple application of  Paoli-Schatzman method based on Newmark scheme
to our problem with \(\gamma=\frac12\) leads to 
 \begin{equation}
 \label{paoli_method1_ch3}
\begin{cases}
\text{find } U_h^{n+1} : [0,T] \rightarrow  \mathbb{R}^m\text{ and } 
\lambda^{n} : [0,T] \rightarrow \mathbb{R} \text{ such that:}\\
\displaystyle{
\frac{M(U_h^{n+1}{-}2U_h^n{+}U_h^{n-1})}{\Delta t^2}+ S(\beta U_h^{n+1}{+}(1{-}2\beta)
U_h^n{+} \beta U_h^{n-1})=-\lambda^{n} e_0 \textrm{ for all } n\geq 2,}\\
\displaystyle{0\leq u_0^{n,e}=}\frac{u_0^{n+1}+eu_0^{n-1}}{1+e}\perp\lambda^{n} \leq 0,\\
U_0\text{ and }U_1\text{ given}.
 \end{cases}
\end{equation}
Here $e$ belongs to $[0,1]$ and is aimed to be interpreted as a
restitution coefficient.  Note that \(U_h^0\) and \(U_h^1\) are given
data and \(U_h^1\) can be evaluated by a one step scheme. We may
observe that taking \(M=M^ {\text{mod}}\) in \eqref{paoli_method1_ch3}, we
are not able to resolve the problem on the kernel of \(M^ {\text{mod}}\). That is the reason
why, \(SU_h^{n-1}\) as well as \(SU_h^n\) are projected on the
orthogonal of the kernel of \(M\).  
 We are interested  here in  Paoli-Schatzman's method with \((\beta,e)=(\frac14,1)\).
 Note that the stability result  immediately follows from \cite{DuP06VBCD}.
 
 \begin{center}
 \begin{figure}[htp!]
   \begin{center}
      \subfigure{\includegraphics[height=5cm]{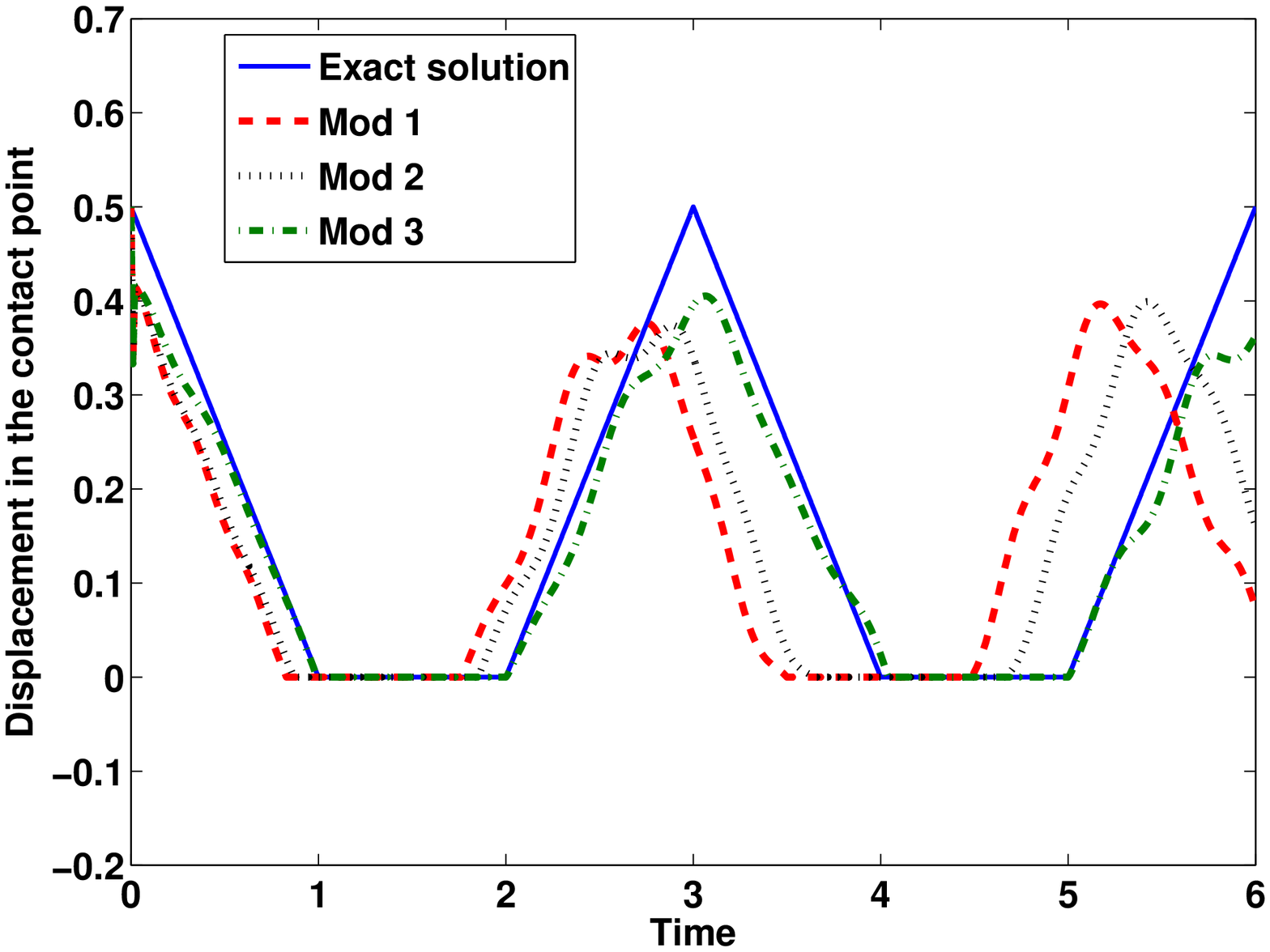}}\hspace*{-.3cm} 
       \subfigure{\includegraphics[height=5cm]{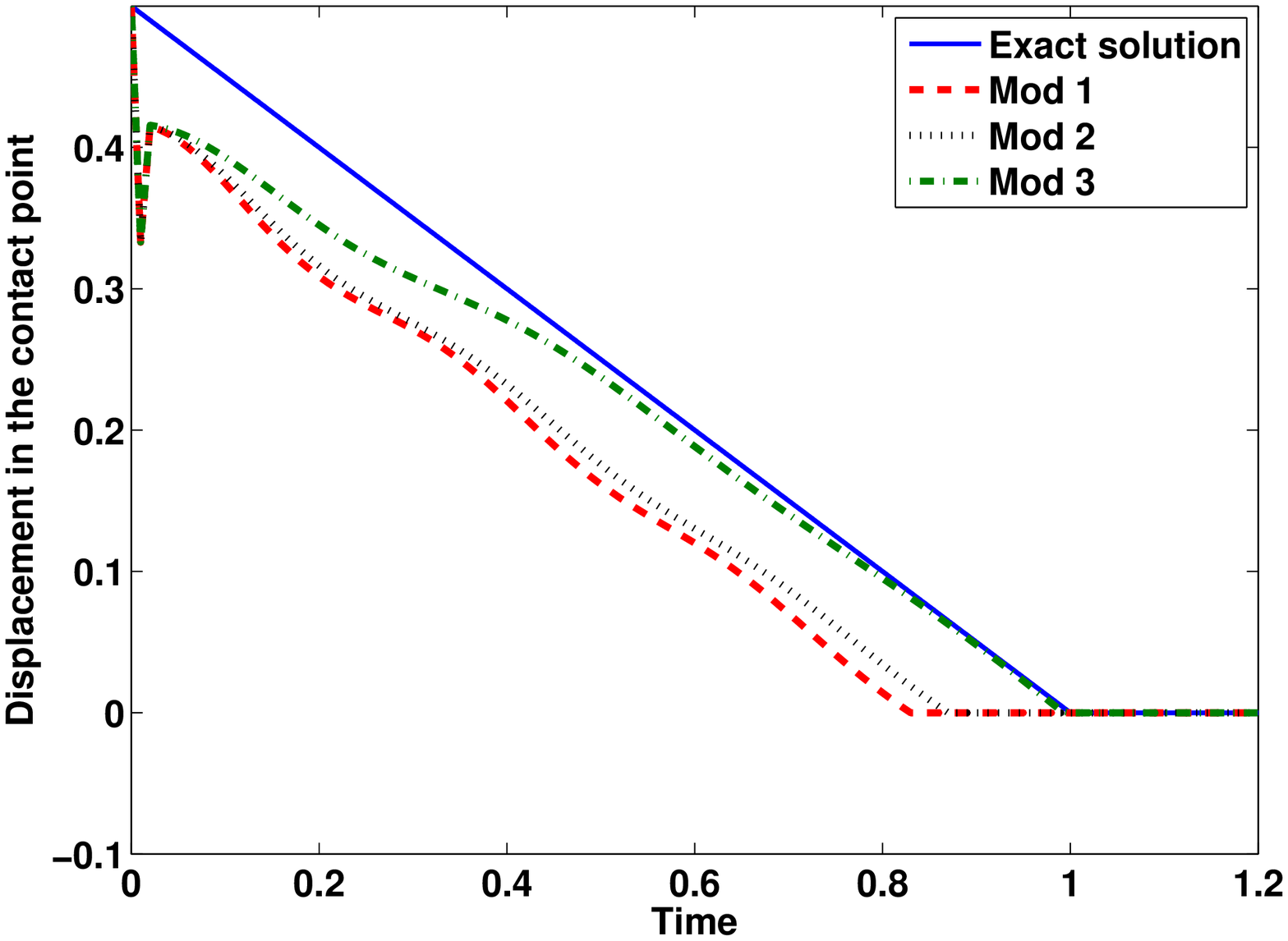}}
        \end{center}
       \vspace{-2em}
   \begin{center}
      \subfigure{\includegraphics[height=5cm]{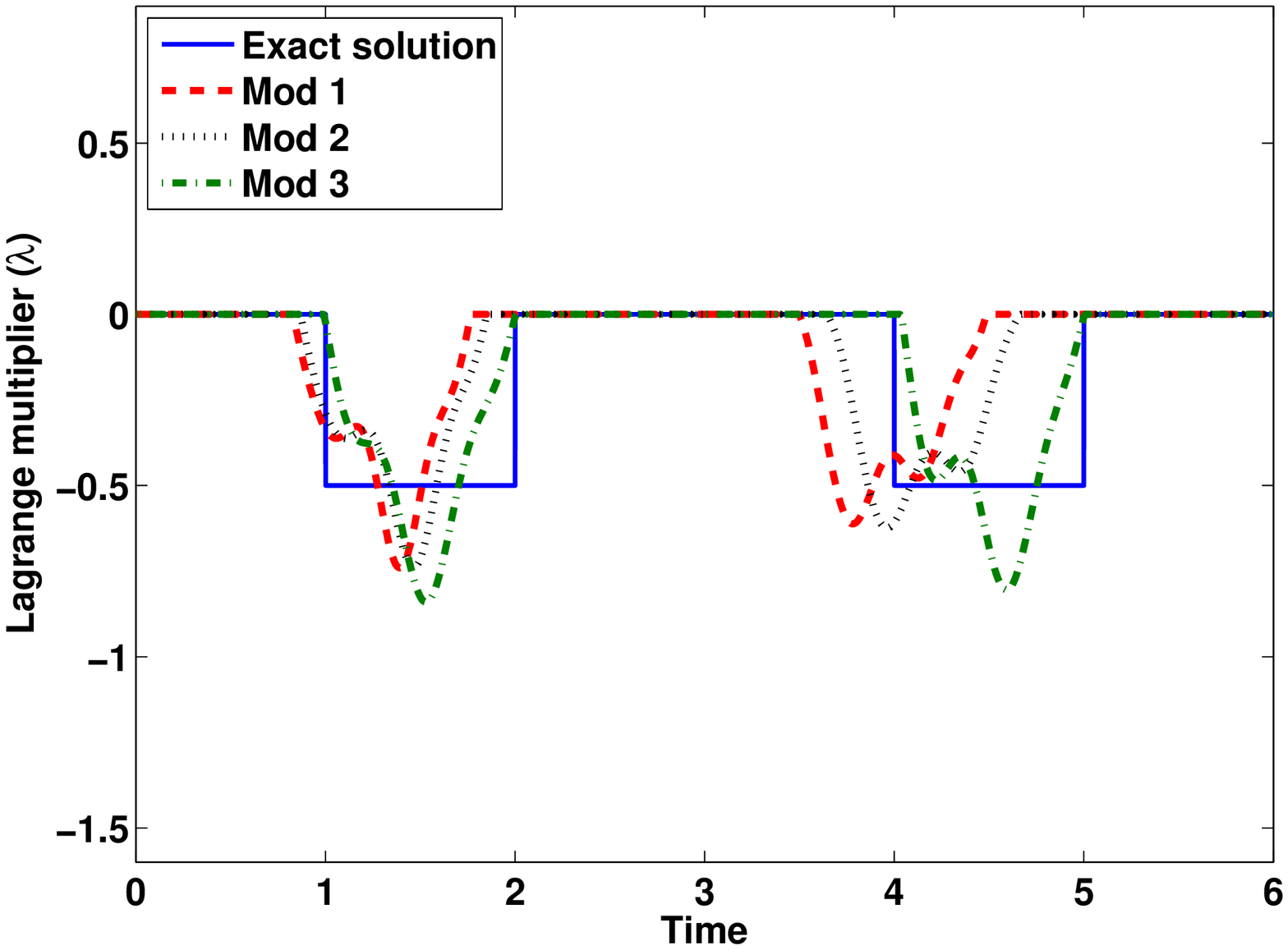}}\hspace*{-.3cm} 
       \subfigure{\includegraphics[height=5cm]{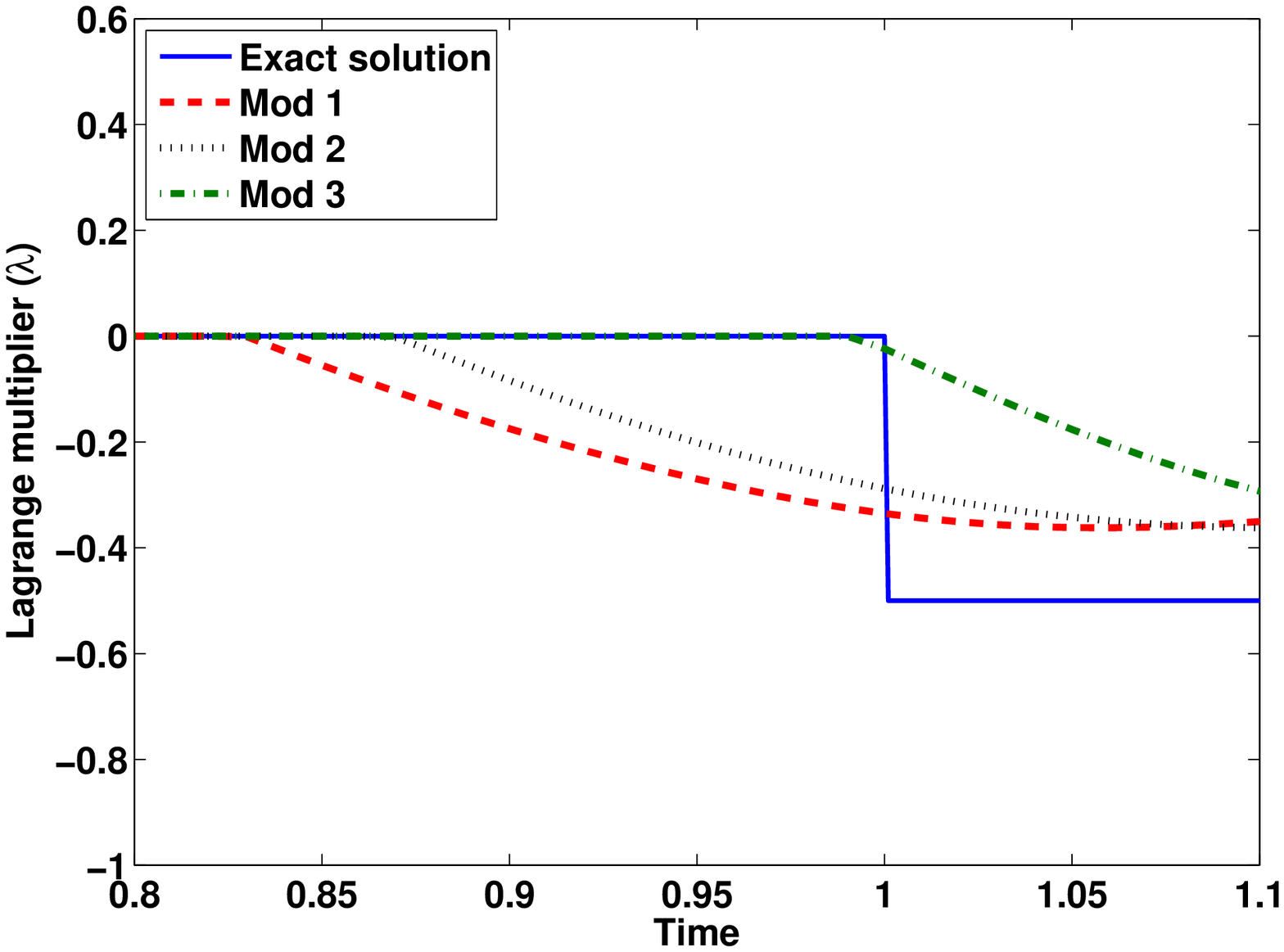}}
    \end{center}
\caption{Comparison of analytical $(u,\lambda)$ and
approximate $(U^n_h , \lambda^n)$ solutions
for some modified mass matrices in the contact node with  Paoli-Schatzman method
($\Delta x = \frac{1}{6}$ and $\Delta t = \frac{1}{100}$).}\label{31}
 \end{figure}
\end{center}

\begin{center}
 \begin{figure}[htp!]
   \begin{center}
      \subfigure{\includegraphics[height=5cm]{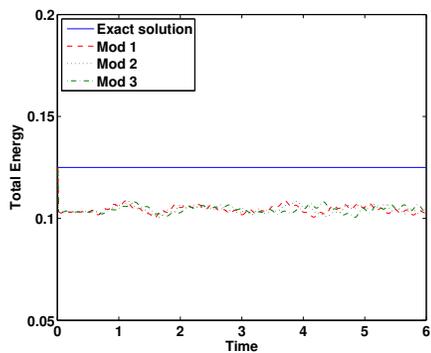}}
 \end{center}
\caption{Comparison of energy associated with analytical solution and
energy associated with approximate solutions for some modified mass matrices with  
Paoli-Schatzman method
($\Delta x = \frac{1}{6}$ and $\Delta t = \frac{1}{100}$).}\label{32}
 \end{figure}
\end{center}

\newpage
\section{A hybrid time integration scheme}
\label{hybrid_scheme}

Generally, the second order schemes illustrate one of 
the difficulties when solving contact problems, namely some oscillations for
energy associated with approximate solutions obtained for different choices of mass 
redistribution can be observed after each impact takes place, 
for instance see Figure~\ref{12}. 
To overcome this
problem, a hybrid time integration scheme is introduced in this section. More precisely,
the scheme $(\mathrm{P}_{U_h}^\textrm{mod})$ is modified to be an unconditionally stable 
and a second order in time scheme; the linear part of  $(\mathrm{P}_{U_h}^\textrm{mod})$
is discretized by using the midpoint method while the non-linear part is discretized by using 
the Crank-Nicolson as well as the midpoint methods. Observe that 
the midpoint method for the linear problem is energy conserving. 
The proposed hybrid time integration scheme inspired from~\cite{CHHIRE1-14} 
reads as follows:
\begin{equation*} 
(\mathrm{P}_{U_h}^\textrm{hyb})\hspace{2em}
\begin{cases}
\text{Find } U_h: [0,T] \rightarrow \mathbb{R}^{m-1}\text{ such that}\\
{U}_h^{n+1}={U}_h^{n}+\frac{\Delta t}{2}\big({\dot U}_h^{n+1}+ {\dot U}_h^{n}\big) \\
{\dot U}_h^{n+1}={\dot U}_h^{n}+\frac{\Delta t}{2}\big({\ddot U}_h^{n+1}+ {\ddot U}_h^{n}\big) \\
{M^*}{\ddot U}_{h}^{n+\frac{1}{2}}+{S^*}{U}_h^{n+\frac{1}{2}} =  F+ 
\frac{H(-u_1^n)}{2h}(u_1^n + u_1^{n+1})^+{e}_1 + \frac{H(u_1^n)}{2h} ((u_1^n)^+
+ (u_1^{n+1})^+){e}_1,
\end{cases}
\end{equation*}
where $V^{n+\frac{1}{2}}\eqldef\frac{V^{n+1}+V^{n}}{2}$ and  
$H$ is defined by
\begin{equation*}
H(s)\eqldef
\begin{cases}
1 \text{ if }s>0, \\
\tfrac{1}{2}\text{ if } s=0, \\
0\text{ otherwise}. 
\end{cases}
\end{equation*}
We assume that the density of external forces $F$ does not depend on time.  
Observe that $\frac{1}{2h}((u_1^n)^+ +(u_1^{n+1})^+) {e}_1$ and 
$\frac{1}{2h}(u_1^n + u_1^{n+1})^+{e}_1$ correspond to the contribution of Crank-Nicolson
and midpoint methods, respectively. 

The discrete evolution of the total energy is preserved in the purely elastic case 
when the density of external forces vanishes, see \cite{LauCCIM03}.  
However, the situation quite different 
in the case of contact constraints, the order of accuracy is
degraded and , for further details see \cite{Hug87,Krenk06,GRHA07}.  
Let us define now the energy evolution by
\(\Delta\mathcal{E}_h^n\eqldef \mathcal{E}_h^{n+1}-\mathcal{E}_h^n\),
where \(\mathcal{E}_h^n\) is assumed to be given by an algorithmic
approximation of the energy \(\mathcal{E}_h(t_n)\) defined in \eqref{energy}.
We evaluate now \(\Delta\mathcal{E}_h^n\) by using the midpoint scheme.
More precisely, we get
\begin{equation*}
\Delta\mathcal{E}_h^n=({\dot U}_h^{n+\frac{1}{2} })^\tra {M^*} 
(\Delta t {\ddot U}_h^{n+\frac{1}{2} }) + 
({ U}_h^{n+\frac{1}{2} })^\tra {S^*}(\Delta t {\dot U}_h^{n+\frac{1}{2} }) -
(\Delta t {\dot U}_h^{n+\frac{1}{2} })^{\tra}F
+\tfrac{((u_1^{n})^+)^2-((u_1^{n+1})^+)^2}{2h}.
\end{equation*}
Since ${M^*}$ is symmetric matrix, it comes that
\begin{equation*}
 \Delta\mathcal{E}_h^n
 ={\Delta t}({\dot U}^{n+\tfrac{1}{2} })^\tra  
\bigl(\tfrac{H(-u_1^n)}{2h}(u_1^n + u_1^{n+1})^+{e}_1 + \tfrac{H(u_1^n)}{2h} ((u_1^n)^+
+ (u_1^{n+1})^+){e}_1\bigr)
+\tfrac{((u_1^{n})^+)^2-((u_1^{n+1})^+)^2}{2h},
\end{equation*}
which implies that
 \begin{equation*}
 \Delta\mathcal{E}_h^n= 
 (u_1^{n+1}{-}u_1^{n})  
\bigl(\tfrac{H(-u_1^n)}{2h}(u_1^n + u_1^{n+1})^+{e}_1 + \tfrac{H(u_1^n)}{2h} ((u_1^n)^+
+ (u_1^{n+1})^+){e}_1\bigr)
+\tfrac{((u_1^{n})^+)^2-((u_1^{n+1})^+)^2}{2h}.
 \end{equation*} 
 We establish below that the energy evolution by \(\Delta\mathcal{E}_h^n\) is nonpositive, 
 namely the energy associated with the hybrid scheme decreases in time. This result
 is summarized in the following lemma:
 
 \begin{lemma}
 \label{lemma_dissip}
Assume that the density of external forces $F$ does not depend on time.
Then the energy evolution \(\Delta\mathcal{E}_h^n\) is nonpositive for all $n>0$.     
\end{lemma}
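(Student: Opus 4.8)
The plan is to start directly from the explicit formula for the energy increment already obtained above. After dotting the last line of $(\mathrm{P}_{U_h}^\textrm{hyb})$ with $e_1$ and using the midpoint relation $u_1^{n+1}-u_1^n = \Delta t\,\dot u_1^{n+\frac12}$, that formula reads (after multiplying by $2h$)
\begin{equation*}
2h\,\Delta\mathcal{E}_h^n = (u_1^{n+1}{-}u_1^{n})\bigl(H(-u_1^n)(u_1^n{+}u_1^{n+1})^+ + H(u_1^n)((u_1^n)^+{+}(u_1^{n+1})^+)\bigr) + ((u_1^{n})^+)^2 - ((u_1^{n+1})^+)^2.
\end{equation*}
The whole assertion then reduces to showing that this scalar quantity is $\le 0$, which I would establish by a finite case distinction on the sign of $u_1^n$, since that sign selects which of the two Heaviside weights $H(-u_1^n)$, $H(u_1^n)$ is active. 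To keep the positive-part algebra readable I would set $a\eqldef u_1^n$ and $b\eqldef u_1^{n+1}$.

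First I would treat $a>0$, where $H(-a)=0$, $H(a)=1$ and $a^+=a$, so the expression becomes $(b-a)(a+b^+)+a^2-(b^+)^2$; sub-splitting on the sign of $b$ gives $0$ when $b\ge 0$ (the terms telescope) and $ab<0$ when $b<0$. Symmetrically, for $a<0$ one has $H(-a)=1$, $H(a)=0$, $a^+=0$, leaving $(b-a)(a+b)^+-(b^+)^2$; the sub-case $a+b\ge 0$ forces $b>0$ and collapses to $-a^2<0$, while $a+b<0$ leaves $-(b^+)^2\le 0$. Finally, for $a=0$ both Heaviside values equal $\tfrac12$, and the weighted sum reduces to $b\,b^+-(b^+)^2$, which is $0$ for either sign of $b$. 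Assembling the cases shows $\Delta\mathcal{E}_h^n=0$ precisely when $u_1^n$ and $u_1^{n+1}$ lie on the same side of the obstacle and $\Delta\mathcal{E}_h^n<0$ exactly at an impact or take-off transition, hence $\Delta\mathcal{E}_h^n\le 0$ in every case, which is the claim.

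I do not expect a genuine obstacle: the argument is elementary and purely algebraic, using neither the Lipschitz bound \eqref{lipg} nor the convergence analysis of Theorem~\ref{convergence mass_ch3}. The only point requiring care is consistent bookkeeping of the positive parts across the sub-cases, and in particular the convention $H(0)=\tfrac12$, which is exactly what makes the degenerate case $a=0$ close cleanly; a different value there would break the nonpositivity at the contact threshold. It is worth remarking in the write-up that the strict negativity occurring only at sign changes is the precise mechanism by which the scheme remains very lightly dissipative, dissipating energy only at the discrete instants modelling the impact.
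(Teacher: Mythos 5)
Your proof is correct and takes essentially the same approach as the paper: both start from the energy-increment formula derived just before the lemma and conclude by an elementary sign case analysis on $u_1^n$ and $u_1^{n+1}$, with your grouping of sub-cases (e.g.\ splitting on the sign of $u_1^n+u_1^{n+1}$ when $u_1^n<0$) being only a cosmetic reorganization of the paper's five cases. If anything, your handling of the case $u_1^n<0$, $u_1^{n+1}>0$ is slightly more explicit than the paper's, which asserts strict negativity there without the sub-split.
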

\begin{proof}
We distinguish five cases depending on the values taken by 
$u^{n}_1$ and  $u^{n+1}_1$. More precisely, we get

\begin{enumerate}[\hspace{2em}1.]

\item If $u^{n}_1 < 0$ and  $u^{n+1}_1 \leq 0$ then
\begin{equation*}
 \Delta\mathcal{E}_h^n=  \tfrac{1}{2h} (u_1^{n+1} - u_1^{n})
 (u_1^n + u_1^{n+1})^+ -\tfrac{1}{2h}((u_1^{n+1})^+)^2 =0.
\end{equation*}

\item If $u^{n}_1 < 0$ and  $u^{n+1}_1 > 0$ then
\begin{equation*}
 \Delta\mathcal{E}_h^n=  \tfrac{1}{2h} (u_1^{n+1} - u_1^{n})
 (u_1^n + u_1^{n+1})^+ -\tfrac{1}{2h}((u_1^{n+1})^+)^2  < 0.
\end{equation*}

\item If $u^{n}_1 >0$ and  $u^{n+1}_1 \leq 0$ then
\begin{equation*}
 \Delta\mathcal{E}_h^n=  \tfrac{1}{2h} (u_1^{n+1} - u_1^{n})(u_1^{n})^+ 
 +\tfrac{1}{2h}((u_1^{n})^+)^2 < 0.
\end{equation*}

\item If $u^{n}_1 >0$ and  $u^{n+1}_1 > 0$ then
\begin{equation*}
 \Delta\mathcal{E}_h^n=  \tfrac{1}{2h} (u_1^{n+1} - u_1^{n})((u_1^{n+1})^+ +(u_1^{n})^+)
 -\tfrac{1}{2h}((u_1^{n+1})^+)^2 +\tfrac{1}{2h}((u_1^{n})^+)^2 =0.
\end{equation*}

\item If $u^{n}_1 = 0$ then 
\begin{equation*}
 \Delta\mathcal{E}_h^n=  \tfrac{1}{2h}(u_1^{n+1})(u_1^{n+1})^+
 -\tfrac{1}{2h}((u_1^{n+1})^+)^2 =0.
\end{equation*}

\end{enumerate}
This proves the lemma.
\end{proof}

  \begin{center}
 \begin{figure}[htp!]
   \begin{center}
      \subfigure{\includegraphics[height=5cm]{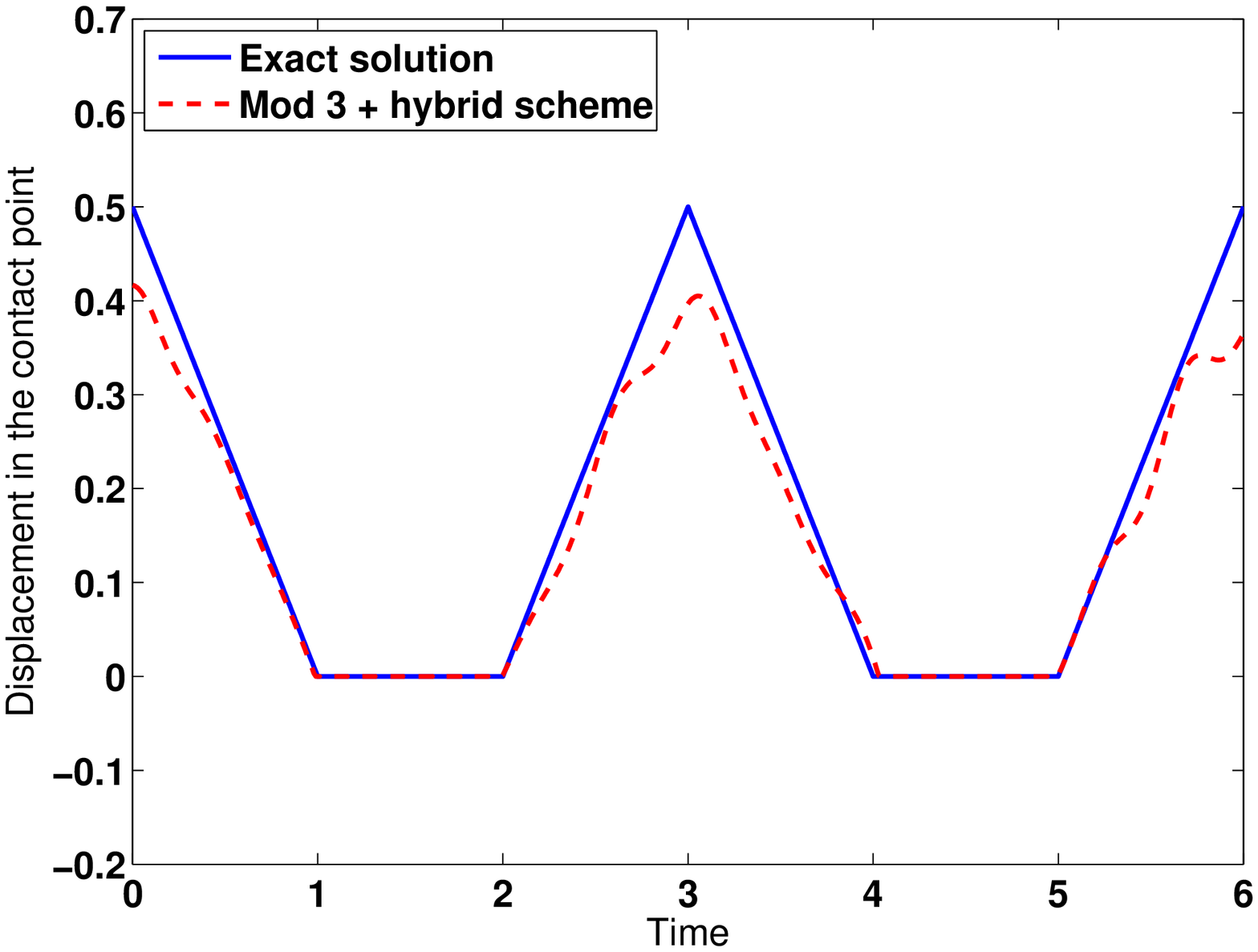}}\hspace*{-.3cm} 
       \subfigure{\includegraphics[height=5cm]{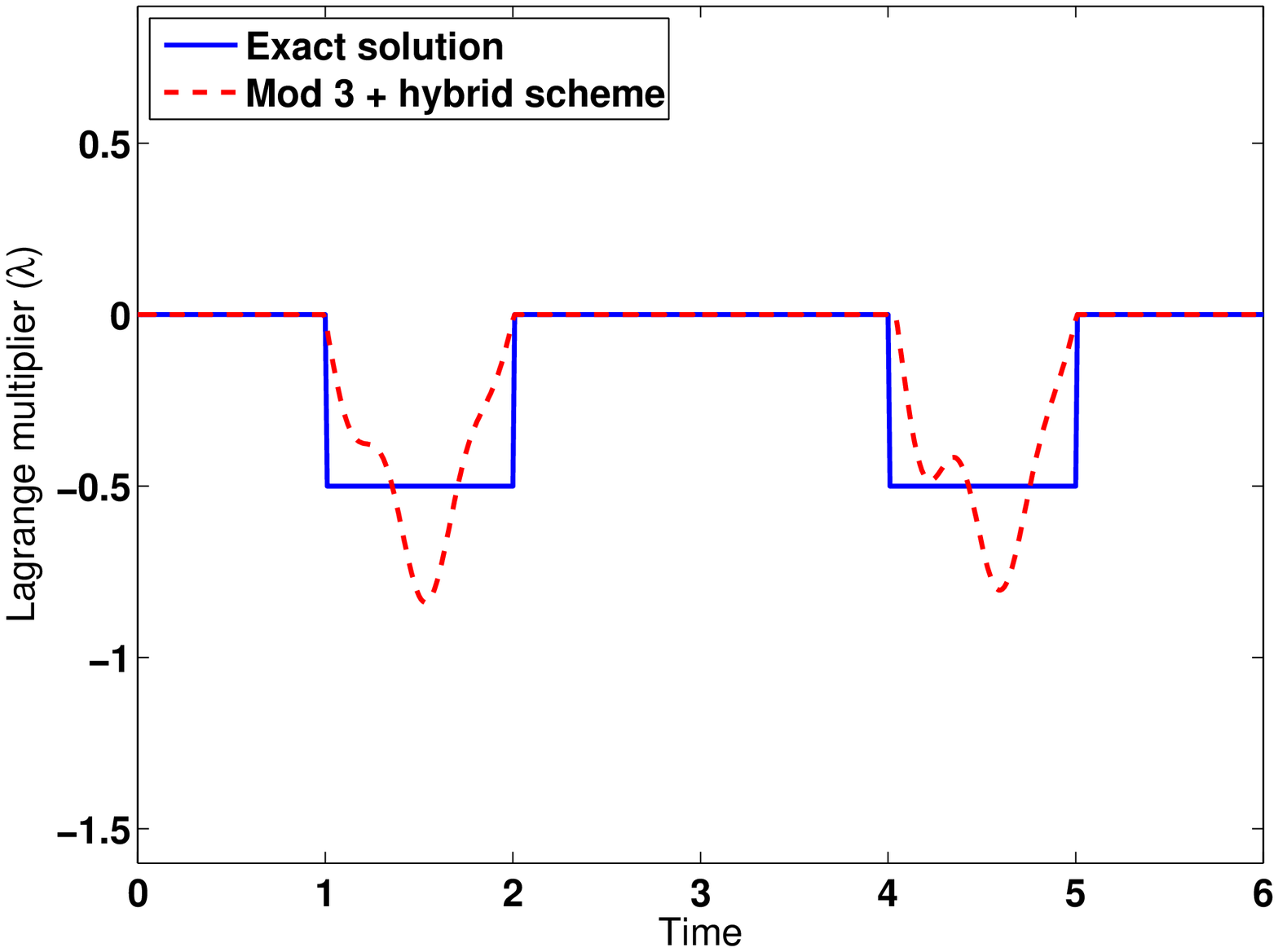}}
\caption{Comparison of analytical $(u,\lambda)$ and
approximate $(U^n_h , \lambda^n)$ solutions
for the modified mass matrix (Mod 3), in the contact node with hybrid scheme
($\Delta x = \frac{1}{6}$ and $\Delta t = \frac{1}{100}$).}\label{34} 
\end{center}   
       \end{figure}
       \end{center}
\begin{center}
 \begin{figure}[htb!]
   \begin{center}
      \subfigure{\includegraphics[height=5cm]{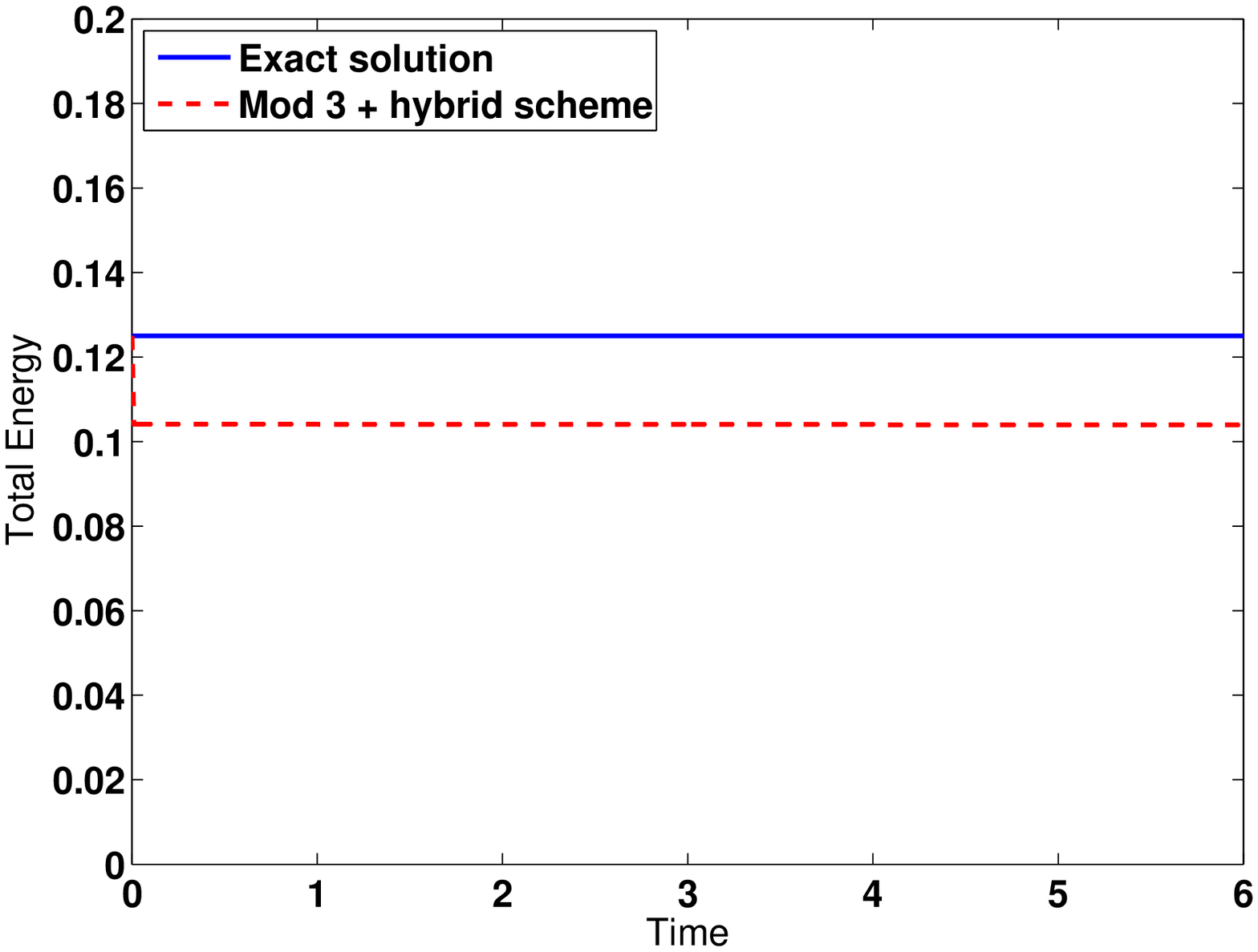}}\hspace*{-.3cm} 
       \subfigure{\includegraphics[height=5cm]{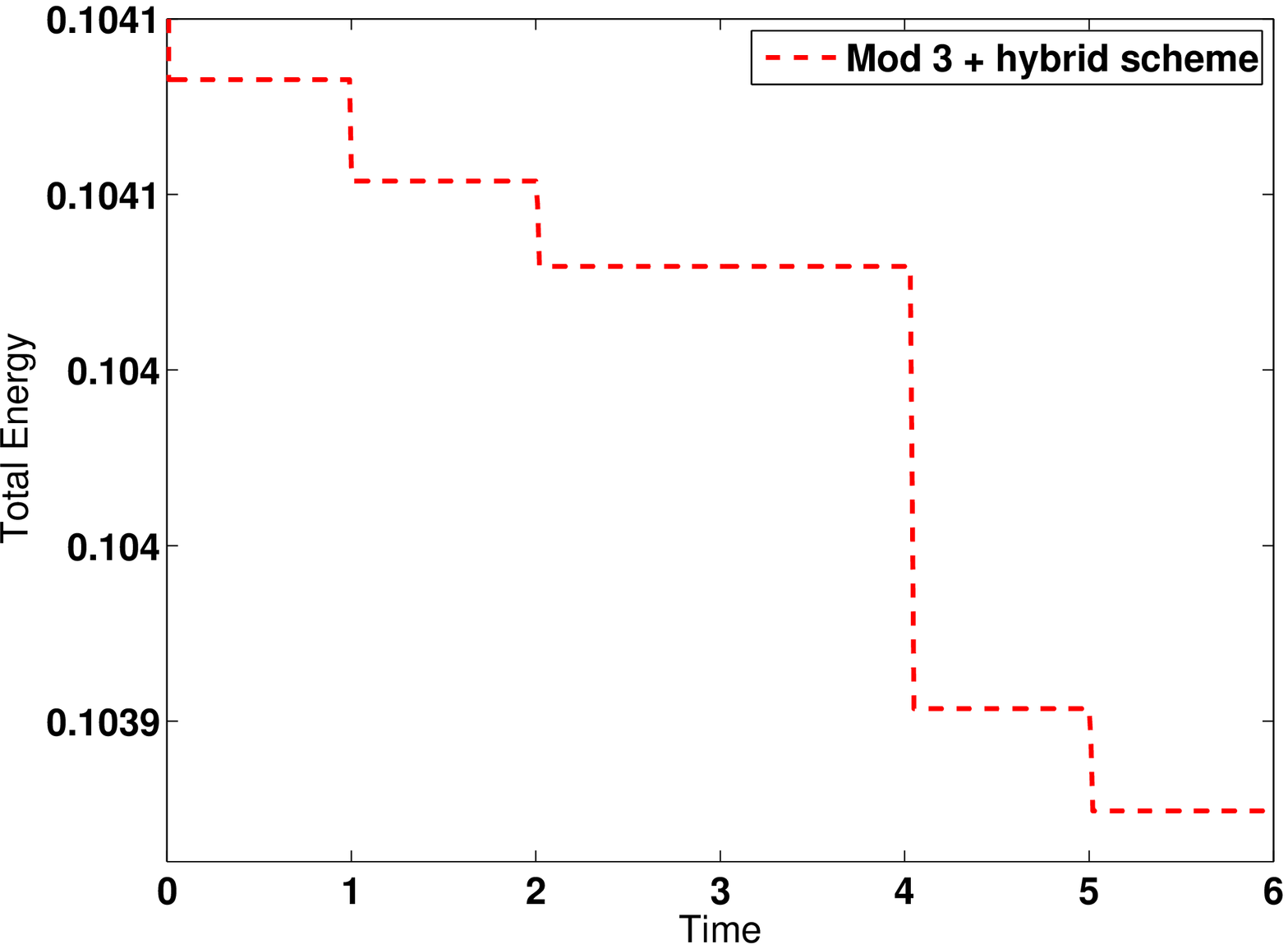}}
 \end{center}
\caption{Comparison of the energy associated with analytical solution and
the energy associated  with approximate solutions for the modified 
mass matrix (Mod 3) with hybrid scheme
($\Delta x = \frac{1}{6}$ and $\Delta t = \frac{1}{100}$). The figure on the right hand side
represents a zoom of the figure on the left hand side.}\label{35}
 \end{figure}
\end{center}
The numerical experiments presented on Figures~\ref{34} and~\ref{35}
are obtained by using a new hybrid scheme
where the mass of the contact node is redistributed on the node preceding the contact. 
This scheme circumvents the undesirable 
oscillations at the contact boundary and it prevents as well the small oscillations of
the evolution of total energy occurring for Newmark methods (see Figures~\ref{12} 
and~\ref{32}). 
Finally, the energy evolution \(\Delta\mathcal{E}_h^n\) is nonpositive for all $n>0$ and it is much
smaller than the energy evolution obtained by using implicit Euler method 
(compare with 
Figure~\ref{22}).

\section{Conclusion}
\label{conclusion}

This manuscript focuses on the weighted mass redistribution method which is particularly 
well adapted to approximate elastodynamic contact problems. 
This method leads to well-posed and energy conserving 
semi-discretization of elastodynamic contact problems. Furthermore, it 
prevents some undesirable oscillations at the contact boundary as well as 
some phase shift between approximate and analytical solutions.
The efficiency of the weighted mass redistribution method depends 
on the position of the nodes where the mass is redistributed; the closer the 
mass of the contact node is transferred, the better are the approximate solutions. 
These results seem also valid in higher space dimensions, and in particular in
2D space (see Table~\ref{tab_intro:4_ch4}).
Indeed the weight mass redistribution on the nodes before the contact 
(Mod~3) gives much better absolute error than
in the cases where any redistribution is done (Mod~1) or where the mass is just eliminated
from the contact nodes (Mod~2). 
The total error rates are
evaluated for the space steps \(\Delta x_1=\Delta x_2=0.05\).  
However the energy associated with the Newmark and Paoli-Schatzman methods
in time have small oscillations, 
(see for instance Figure~\ref{12}), which is unacceptable from a mechanical view point. 
Then a new hybrid scheme having the properties to be an 
unconditionally stable has been developed giving some promising numerical results. 
It allow to have a far better approximation compared to existing unconditionally stable
scheme on the implicit Euler scheme (compare Figures~\ref{22} and~\ref{35}).

\vspace{0.5em}
\begin {table}[!ht]
\centering
\begin{tabular}{|l|c|c|c|}
\hline
 \small{Method employed}  &  
 \small{Mod~1}  &   \small{Mod~2} &  \small{Mod~3}\\
\hline
 \small{${\norm[\L^\infty(0,T;\L^2((0,1)\times(0,1)))]{U_h^n{-}U}}$}& 
  \small{0.0104} & \small{0.0046} &  \small{0.0038}\\
\hline
\end{tabular}
\caption {Total error rates for the displacement} \label{tab_intro:4_ch4} 
\end {table}

  \renewcommand{\arraystretch}{0.91}{\small{ 
\paragraph*{Acknowledgments} 
The support of the GA\v CR Grant GA15-12227S and RVO: 67985840,
and of the AV\v CR--CNRS Project ``Mathematical and numerical analysis of 
contact problems for materials with memory" is gratefully acknowledged.

\end{document}